\documentclass{amsproc}
\usepackage{amsmath,amsfonts,amssymb,amsthm,amscd,mathrsfs,mathdots,bbm,color}
\usepackage{geometry}
\title{Moments of discrete classical $q$-orthogonal polynomial ensembles}
\author[P. Cohen]{Philip Cohen}
\address{Mathematical Institute, University of Oxford, Oxford, OX2 6GG, UK.}
\email{cohenp@maths.ox.ac.uk}
\thanks{Research supported by ERC Advanced Grant 669306.}

\theoremstyle{plain}
\newtheorem{thm}{Theorem}[section]
\newtheorem{lem}[thm]{Lemma}
\newtheorem{rmk}[thm]{Remark}
\newtheorem{cor}[thm]{Corollary}

\begin{document}

\begin{abstract}
We consider some discrete $q$-analogues of the classical continuous orthogonal polynomial ensembles.  Building on results due to Morozov, Popolitov and Shakirov, we find representations for the moments of the discrete $q$-Hermite and discrete $q$-Laguerre ensembles in terms of basic hypergeometric series.  We find that when the number of particles is suitably randomised,  the moments may be represented as basic hypergeometric orthogonal polynomials, with corresponding three-term recurrences in $k$, the order of the moments.
\end{abstract}

\maketitle

\section{Introduction}
An orthogonal polynomial ensemble is a probability measure on $\mathbb{R}^N$ given by
\begin{equation} \label{eq:dQ}
\mathrm{d}Q_N(\mathbf{x})=\frac{1}{Z_N} \Delta_N(\mathbf{x})^2 \prod_{j=1}^N \mathrm{d} \mu(x_j),
\end{equation}
where $\mathbf{x} = (x_1, \dots, x_N)$, is an $N$-tuple, $\mu$ is a probability measure on the real line having all moments,
\begin{equation}
\Delta_N(\mathbf{x})=\det_{1\leq i,j\leq N}\left(x_i^{j-1}\right)=\prod_{i<j}(x_j-x_i),
\end{equation}
which is known as the Vandermonde determinant, and $Z_N$ is a normalisation constant,  ensuring that the probability measure integrates to 1. The measure~\eqref{eq:dQ} can be conveniently analysed by using the so-called \emph{orthogonal polynomial method} pioneered by Mehta~\cite{Mehta60} in the study of random matrices. Denote by $p_n(x)$, $n\in \mathbb{N}$, the orthonormal polynomials with respect to the measure $\mu$. Then, a standard calculation shows that
\begin{equation}
\mathrm{d} Q_N(\mathbf{x})=\frac{1}{N!}\det_{1\leq i,j\leq N}K(x_i,x_j)\prod_{j=1}^N\mathrm{d}\mu(x_j),
\label{eq:dQ_K}
\end{equation}
where $K(x,y)=\sum_{n=0}^{N-1}p_n(x)p_n(y)$. In fact, all the marginals~\eqref{eq:dQ_K} can be expressed as determinants of  the correlation kernel $K(x,y)$. In particular, for suitable functions $f$,
\begin{equation}
\int_{\mathbb{R}^N}\left(\frac{1}{N}\sum_{n=1}^Nf(x_n)\right)\mathrm{d} Q_N(\mathbf{x})=\int_{\mathbb{R}}f(x)\mathrm{d}\rho_N(x),
\end{equation}
where the normalised one-point function $\rho_N$ is the probability measure
\begin{equation} \label{eq:onept}
\mathrm{d} \rho_N(x)=\frac{1}{N}\sum_{n=0}^{N-1}p_n(x)^2\mathrm{d} \mu(x).
\end{equation}
Therefore the study of the probability measure $Q_N$ for some reference measure $\mu$ amounts to understanding properties of the associated orthogonal polynomials $p_n(x)$. 

One of the most studied quantities on orthogonal polynomial ensembles are the so-called \emph{moments}. Let $\mathbf{X}=(X_1,\dots,X_N)$ be distributed according to the probability measure $Q_N = Q$. Then, the $k^{th}$ moment of $\mathbf{X}$ is the expectation of the $k^{th}$ power sum
\begin{equation} \label{eq:mom}
\mathbb{E}_Q \left[ \frac{1}{N} \sum_{n=1}^N X_n^k \right] 
=\int_{\mathbb{R}} x^k \mathrm{d} \rho_N(x),\quad k\in \mathbb{N}.
\end{equation}
In ensembles with random matrix interpretations, where the $X_i$'s can be thought of as eigenvalues of a matrix $M_N$, clearly this is the same as calculating the expectation of powers of traces,
\begin{equation}
\mathbb{E}_Q \left[ \frac{1}{N} \sum_{n=1}^N X_n^k \right] =  \frac{1}{N} \mathbb{E}_Q \left[ \text{tr} M_N^k \right].
\end{equation}
For the classical orthogonal polynomials, the corresponding ensembles arise naturally as eigenvalue distributions in random matrix theory.  These are known as the Gaussian (GUE), Laguerre (LUE) and Jacobi (JUE) unitary ensembles, with associated Hermite, Laguerre and Jacobi polynomials \cite{Mehta67}. Random matrices of this type have been studied extensively,  as they arise naturally in many settings. Wishart used random matrices for statistical analysis in the 1930s,  and developed the Complex Wishart distribution (which is equivalent to the LUE mentioned above) in the 1950s. Meanwhile Wigner applied similar models to study nuclear physics in the 1950s and 1960s.

The moments of the Gaussian, Laguerre and Jacobi unitary ensembles satisfy three-term recurrences in $k$ (the order of the moment). These remarkable recursions were first discovered by Harer and Zagier~\cite{Harer86} for the GUE, and by Haagerup and Thorbj\o rnsen for the LUE~\cite{Haagerup03} (the extension of the Haagerup-Thorbj\o rnsen recursion to moments of the inverse LUE was examined later in~\cite{Cunden16b}). Recursions in $k$ for moments of the JUE were obtained by Ledoux~\cite{Ledoux04}, and recast as three-term recurrences in~\cite{Cunden19}.  

Recent studies by Cunden, Mezzadri, O'Connell and Simm~\cite{Cunden19} clarified the origin of these three-term recurrences in $k$ for moments of the classical ensembles. In fact, the Harer-Zagier, Haagerup-Thorbj\o rnsen and Ledoux recursions can be interpreted as second order difference equation in the variable $k$ (discrete Sturm Liouville problems), and their solutions are hypergeometric orthogonal polynomials in $k$ belonging to the Askey scheme~\cite{Ismail05, Koekoek10}. The polynomial structure, orthogonality relation and hypergeometric representation of the moments as function of $k$ explain several nontrivial symmetries and provide new results. For instance, by duality, moments of classical random matrices, if suitably normalised can be viewed as hypergeometric  polynomials in the variable $(N-1)$ as well; therefore, they also satisfy three-term recurrences in $N$, the nature of which is different from the general topological recursion \cite{BG, CE, Eynard15,Eynard16, Guionnet15}.

In a recent work \cite{Cohen19}, some classical discrete ensembles (Charlier, Meixner and Krawtchouk) were considered, building on some work of Ledoux \cite{Ledoux05}, and hypergeometric formulas for the moments were found. Furthermore, when the number of particles is randomised according to certain distributions, the moments of these new ensembles are hypergeometric orthogonal polynomials, and therefore satisfy three-term recurrences in $k$.

In this paper, we consider some discrete $q$-analogues of the classical continuous orthogonal polynomial ensembles.  Building on results due to Morozov, Popolitov and Shakirov \cite{Morozov20, MPSqt} we find representations for the moments of the discrete $q$-Hermite and discrete $q$-Laguerre ensembles in terms of basic hypergeometric series.  We find that when the number of particles is suitably randomised,  the moments may be represented as basic hypergeometric orthogonal polynomials, with corresponding three-term recurrences in $k$, the order of the moments.

Finally, we remark that this paper is based on results from the author’s PhD thesis \cite{CohenThesis}.  After the final version of the thesis was submitted, a related independent work by Forrester, Li, Shen and Yu \cite{Forrester21} appeared on arXiv which has some overlap but is mostly complementary to the present work.

The outline of this paper is as follows. In Section \ref{se:Class}, we recall results concerning the moments of the classical GUE and LUE. In particular,  the generating functions and three-term recurrences satisfied by the moments.  In Section \ref{se:Pre} we introduce preliminary definitions and notations for the $q$-analogues, basic hypergeometric series, and the Jackson $q$-integral. In Section \ref{se:qHerm},  we define the discrete $q$-Hermite ensemble and, building on results of Morozov, Popolitov and Shakirov \cite{Morozov20}, we obtain a formula for the moments in terms of basic hypergeometric functions. We find that when the number of particles is suitably randomised, the moments are $q$-Hahn polynomials and thus satisfy three-term recurrences. Finally in Section \ref{se:qLag},  a new formula for the moments of the $q$-Laguerre ensemble is derived using results of Morozov, Popolitov and Shakirov \cite{MPSqt}. As in the $q$-Hermite case, when the number of particles is suitably randomised, the moments are Big $q$-Jacobi polynomials, and again satisfy three-term recurrences.

\section{Classical results} \label{se:Class}
In this section, we state some results about the classical continuous ensembles, namely the GUE (or Hermite ensemble, since the associated polynomials are the Hermite polynomials), and the LUE (or Laguerre ensemble).
\subsection{Hermite ensemble}
Consider an $N \times N$ Hermitian matrix $X=(X_{i,j})$ with independent random Gaussian entries, with standard Real Gaussian random variables on the diagonal, and Complex Gaussians below the diagonal. That is, a matrix chosen according to the following probability measure on $N \times N$ Hermitian matrices
\begin{equation}
P_N(X) \mathrm{d} X = \prod_{i=1}^N \frac{1}{\sqrt{2 \pi}} e^{-x_{ii}^2 / 2} \mathrm{d} x_{ii}
\prod_{i < j} \frac{1}{\pi} e^{-|x_{ij}|^2} \mathrm{d} \Re x_{ij} \mathrm{d} \Im x_{ij}.
\end{equation}
Matrices of this form are said to belong to the \emph{Gaussian Unitary Ensemble}, or GUE, so called because the measure is invariant under conjugation by unitary matrices. Then it is a well-known result in random matrix theory that the distribution of the eigenvalues of $X$ is a probability measure on $\mathbb{R}^N$ 
\begin{equation}
\mathrm{d}Q_N(\lambda_1, \dots, \lambda_N) = \frac{1}{ \prod_{i=1}^N i!} \Delta_N(\lambda_1, \dots, \lambda_N)^2 \prod_{j=1}^N \frac{1}{\sqrt{2 \pi}} e^{-\lambda_j^2 / 2} \mathrm{d} \lambda_j,
\end{equation}
which is exactly the form of \eqref{eq:dQ} with the reference measure $\mu$ being a standard Gaussian.  The Hermite polynomials are orthogonal with respect to the standard Gaussian measure on the real line
\begin{equation}
\mathrm{d} \mu(x) = \frac{1}{\sqrt{2 \pi}} e^{-x^2/2} \mathrm{d} x,
\end{equation}
and the $n^{th}$ normalised Hermite polynomial $h_n(x)$ can be written as \cite[9.15]{Koekoek10}
\begin{equation} \label{eq:Hermite}
h_n(x) = \frac{ x^n}{\sqrt{n!}} {}_2F_0 \left( \begin{matrix} -\frac{n}{2}, \frac{1-n}{2}  \\ - \end{matrix} ; -\frac{2}{x^2} \right). 
\end{equation} 
We denote the $2k^{th}$ moment of the Hermite ensemble by
\begin{equation}
Q_k(n) := \frac{\mathbb{E}[\text{tr} X_n^{2k} ]}{n}.
\end{equation}
Then the following results about the moments are known.
\begin{thm}[Harer-Zagier \cite{Harer86}]
The even moments of the Hermite ensemble $Q_k(n)$ satisfy the following three-term recurrence in $k$:
\begin{equation} \label{eq:HZ_rec}
(k+2) Q_{k+1}(n) = 2n(2k+1) Q_k(n) + k(2k+1)(2k-1) Q_{k-1}(n).
\end{equation}
\end{thm}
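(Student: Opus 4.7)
The plan is to apply the orthogonal polynomial method recalled in the introduction. Writing $N$ for the matrix size (denoted $n$ in the statement) and applying \eqref{eq:mom} together with \eqref{eq:onept} to $f(x) = x^{2k}$ gives
\begin{equation*}
Q_k(N) \;=\; \frac{1}{N} \sum_{n=0}^{N-1} \int_{\mathbb{R}} x^{2k}\, h_n(x)^2 \,\mathrm{d}\mu(x).
\end{equation*}
First, I would substitute the explicit hypergeometric form \eqref{eq:Hermite} of $h_n(x)$, expand the square, and evaluate the resulting Gaussian integrals term by term using the elementary identity $\int x^{2m}\,\mathrm{d}\mu(x) = (2m-1)!!$. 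After simplification, this produces a closed-form terminating double sum for $\int x^{2k} h_n(x)^2\, \mathrm{d}\mu(x)$ whose entries involve binomial coefficients in $n$ and $k$.

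Next, I would perform the sum over $n = 0, \dots, N-1$. The $n$-dependence will appear through expressions of the form $\binom{n}{j}\binom{n}{\ell}$, which can be collapsed using the hockey-stick identity combined with a Vandermonde--Chu convolution. The outcome is a ${}_3F_2$-type hypergeometric representation of $Q_k(N)$ at unit argument, exhibiting the moments as a hypergeometric polynomial in $k$ (the viewpoint highlighted by Cunden--Mezzadri--O'Connell--Simm in \cite{Cunden19} and the $q$-analogue of what will occur later in the paper).

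With this explicit hypergeometric formula for $Q_k(N)$ in hand, the recurrence \eqref{eq:HZ_rec} follows from a standard contiguous relation for ${}_3F_2$ in the index $k$; equivalently, Zeilberger's creative telescoping algorithm applied to the summand will mechanically produce a certificate for the proposed three-term recurrence. The main obstacle will be the summation step: one must choose the combinatorial identities carefully so that the $n$-sum collapses to a single manageable hypergeometric series rather than leaving an unwieldy nested sum. Once this reduction is achieved, extracting the recurrence from the resulting ${}_3F_2$ is routine algebra.
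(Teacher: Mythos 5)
The paper offers no proof of this theorem: it is the classical Harer--Zagier recursion, stated with a citation to \cite{Harer86}, whose original derivation is combinatorial/analytic (counting polygon gluings and manipulating an integral representation of the generating function). Your route is therefore genuinely different from the source the paper relies on, but it coincides exactly with the modern explanation the paper itself endorses two theorems later: by Witte--Forrester, $Q_k(n) = (2k-1)!!\,{}_2F_1\left(\begin{smallmatrix} -k,\ 1-n \\ 2\end{smallmatrix};2\right)$, i.e.\ a Meixner polynomial in $k$, and the recurrence \eqref{eq:HZ_rec} is then the Meixner three-term recurrence after restoring the $(2k-1)!!$ normalisation (this is the content of the remark following the Witte--Forrester theorem and of \cite{Cunden19}). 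Your outline is sound, with two caveats you should fix when writing it out. First, the object you land on is not a ${}_3F_2$ at unit argument --- that is the Laguerre/Hahn situation of \eqref{eq:LUE_mom} --- but a terminating ${}_2F_1$ at argument $2$; the mechanism (contiguous relations, or Zeilberger applied to the summand) still produces a three-term recurrence, but you should expect the Meixner, not the Hahn, family. Second, the reduction of $\int x^{2k} h_n(x)^2\,\mathrm{d}\mu$ is a genuine double sum coming from squaring the ${}_2F_0$ in \eqref{eq:Hermite}, and together with the $n$-sum you face a triple sum; the Vandermonde--Chu step that collapses it to a single sum is where all the work lives and deserves to be written explicitly rather than asserted. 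Finally, the recurrence \eqref{eq:HZ_rec} is for $Q_k(n)$ itself, not for $Q_k(n)/(2k-1)!!$, so the factors $(2k+1)$ and $(2k+1)(2k-1)$ must be tracked through the ratios of consecutive double factorials; this is routine but easy to get wrong by one shift.
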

\begin{thm}[Harer-Zagier \cite{Harer86}] \label{th:HZ_gf}
The even moments of the Hermite ensemble $Q_k(n)$ satisfy the generating function in $n$ for fixed $k$,
\begin{equation} \label{eq:GUE_gf}
\sum_{n \geq 1} \frac{ Q_k(n)}{(2k-1)!!} z^n = \frac{z}{1-z^2} \left( \frac{1+z}{1-z} \right)^{k+1}
\end{equation}
\end{thm}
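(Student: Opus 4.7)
The plan is to combine the orthogonal polynomial method with the Mehler (Hermite--Poisson) kernel formula. Starting from the determinantal structure in \eqref{eq:dQ_K}--\eqref{eq:onept}, I would first expand
\begin{equation*}
\mathbb{E}\bigl[\mathrm{tr}\, X_n^{2k}\bigr] \;=\; \int_{\mathbb{R}} x^{2k}\sum_{j=0}^{n-1} h_j(x)^2\,\mathrm{d}\mu(x).
\end{equation*}
Introducing a generating variable $z$ and exchanging the summations over $n$ and $j$ (valid for $|z|<1$) then gives
\begin{equation*}
\sum_{n\geq 1}\mathbb{E}\bigl[\mathrm{tr}\, X_n^{2k}\bigr]\,z^n \;=\; \frac{z}{1-z}\int_{\mathbb{R}} x^{2k}\sum_{j\geq 0} h_j(x)^2\, z^j\,\mathrm{d}\mu(x),
\end{equation*}
so everything reduces to evaluating the single integral on the right-hand side.

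The key analytic input is Mehler's formula specialised to the diagonal,
\begin{equation*}
\sum_{j\geq 0} h_j(x)^2\, z^j \;=\; \frac{1}{\sqrt{1-z^2}}\exp\!\left(\frac{x^2 z}{1+z}\right),
\end{equation*}
obtained from the classical bivariate Poisson kernel for Hermite polynomials by setting $y=x$ in the orthonormal normalisation. Substituting and multiplying by $(2\pi)^{-1/2}e^{-x^2/2}$, the quadratic exponent simplifies to $-\tfrac{x^2}{2}\cdot\tfrac{1-z}{1+z}$, i.e.\ a Gaussian density of variance $\sigma^2 = (1+z)/(1-z)$. Using the elementary moment formula $\int_{\mathbb{R}} x^{2k} (2\pi\sigma^2)^{-1/2} e^{-x^2/(2\sigma^2)}\,\mathrm{d}x = (2k-1)!!\,\sigma^{2k}$ and carefully collecting prefactors produces
\begin{equation*}
\sum_{j\geq 0} z^j \int_{\mathbb{R}} x^{2k} h_j(x)^2\,\mathrm{d}\mu(x) \;=\; (2k-1)!!\,\frac{(1+z)^k}{(1-z)^{k+1}},
\end{equation*}
and multiplying by $z/(1-z)$ delivers $(2k-1)!!\,\dfrac{z(1+z)^k}{(1-z)^{k+2}} = (2k-1)!!\,\dfrac{z}{1-z^2}\bigl(\tfrac{1+z}{1-z}\bigr)^{k+1}$, which is the claimed closed form.

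The main obstacle is tracking the normalisation in Mehler's formula: the coefficient inside the exponential differs between the physicists', probabilists', and orthonormal Hermite polynomials, and only the orthonormal version (matched to the weight $e^{-x^2/2}/\sqrt{2\pi}$) yields the $x^2z/(1+z)$ that combines cleanly with $-x^2/2$ to give another Gaussian. The Fubini exchange is routine---in a neighbourhood of $z=0$ the Mehler kernel times $x^{2k}$ is dominated by a Gaussian in $x$, and the resulting rational identity in $z$ extends to $|z|<1$ by analytic continuation. An alternative strategy would be to verify directly that the proposed right-hand side satisfies the three-term recurrence \eqref{eq:HZ_rec}, with multiplication by $n$ translating into $z\,\tfrac{\mathrm{d}}{\mathrm{d}z}$, and then to match the initial case $k=0$; this avoids Mehler but is computationally heavier.
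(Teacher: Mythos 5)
Your argument is correct and is essentially the proof the paper itself indicates: immediately after Remark~\ref{re:randHerm} the text observes that the identity \eqref{eq:GUE_gf} is a rewriting of Mehler's formula, and your computation (diagonal Mehler kernel, recombination with the weight $e^{-x^2/2}/\sqrt{2\pi}$ into a Gaussian of variance $(1+z)/(1-z)$, then the $(2k-1)!!\,\sigma^{2k}$ moment formula) fills in exactly that sketch, with the prefactors working out to $(2k-1)!!\,(1+z)^k/(1-z)^{k+1}$ before the final factor of $z/(1-z)$. No gaps; your caution about the normalisation conventions in Mehler's formula is well placed, since the paper states it for the physicists' polynomials $H_n$ rather than the orthonormal $h_n$.
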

\begin{thm}[Witte-Forrester \cite{Witte14}]
The $2k^{th}$ moment of the Hermite ensemble one-point function is given by the following hypergeometric series.
\begin{equation}
Q_k(n) = (2k-1)!! {}_2F_1 \left( \begin{matrix} -k ,  1-n  \\ 2\end{matrix} ; 2 \right),
\end{equation}
and we note that the odd moments are 0, by symmetry.
\end{thm}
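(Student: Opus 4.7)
\medskip

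\noindent\textbf{Proof proposal.} My plan is to use the generating function of Theorem~\ref{th:HZ_gf} to extract a finite-sum representation for $Q_k(n)/(2k-1)!!$, and then rewrite that sum in Pochhammer form to obtain the claimed ${}_2F_1$.

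The first step is coefficient extraction in \eqref{eq:GUE_gf}. I would make the substitution $w = 2z/(1-z)$, so that $(1+z)/(1-z) = 1+w$, turning the rational right-hand side into a polynomial expression in $w$. A binomial expansion, combined with the standard identity $\binom{k+1}{j+1}/(k+1) = \binom{k}{j}/(j+1)$, rewrites the generating function as a sum of shifted geometric factors $\binom{k}{j}\,\tfrac{2^j}{j+1}\,\bigl(\tfrac{z}{1-z}\bigr)^{j+1}$. Reading off the coefficient of $z^n$ from each term via $(z/(1-z))^{j+1} = \sum_{n\geq j+1}\binom{n-1}{j}z^n$ yields
\begin{equation*}
\frac{Q_k(n)}{(2k-1)!!} \;=\; \sum_{j=0}^{\min(k,\,n-1)} \binom{k}{j}\binom{n-1}{j}\frac{2^j}{j+1}.
\end{equation*}

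The second step is a direct identification. Writing $\binom{k}{j} = (-1)^j(-k)_j/j!$, $\binom{n-1}{j} = (-1)^j(1-n)_j/j!$ and $1/(j+1) = j!/(2)_j$ converts the summand into $(-k)_j(1-n)_j\,2^j/((2)_j\,j!)$, which is precisely the $j$-th term of ${}_2F_1(-k,1-n;2;2)$. The series terminates at $j = k$ because $(-k)_{k+1}=0$, reflecting the polynomial nature of $Q_k(n)$ in $n$, and the vanishing of the odd moments is immediate from the $x\mapsto -x$ symmetry of the Gaussian weight.

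The principal obstacle is the opening binomial manipulation: producing the clean coefficient $\binom{k}{j}/(j+1)$ and correctly tracking exponents under the $w\leftrightarrow z$ change of variables. Should this prove delicate, a cleaner alternative is to verify inductively that $R_k(n):=(2k-1)!!\cdot {}_2F_1(-k,1-n;2;2)$ satisfies the Harer-Zagier recurrence \eqref{eq:HZ_rec}: substituting $Q_k = (2k-1)!!\,F_k$ into \eqref{eq:HZ_rec} reduces it to the three-term recurrence $(k+2)\,F_{k+1}(n) = 2n\,F_k(n) + k\,F_{k-1}(n)$, which is the classical Meixner three-term recurrence at parameters $\beta=2$, $c=-1$ under the identification $F_k(n) = M_k(n-1;\,2,\,-1)$ (see \cite{Koekoek10}). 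The induction then closes using the matching base cases $Q_0(n)=1$ and $Q_1(n)=n$.
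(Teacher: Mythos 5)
The paper states this result without proof, citing Witte--Forrester, so there is no internal argument to compare against; I am judging your proposal on its own merits. Your second step and your fallback route are both sound: the term-by-term identification of $\sum_j \binom{k}{j}\binom{n-1}{j}\tfrac{2^j}{j+1}$ with ${}_2F_1(-k,1-n;2;2)$ is correct, and the reduction of \eqref{eq:HZ_rec} to $(k+2)F_{k+1}=2nF_k+kF_{k-1}$, together with the Meixner identification $F_k(n)=M_k(n-1;2,-1)$ and the base cases $Q_0(n)=1$, $Q_1(n)=n$, closes the induction (modulo taking the Harer--Zagier recurrence as given, which the paper does).

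Your first route, however, contains a concrete error. The decomposition you assert,
\begin{equation*}
\frac{z}{1-z^2}\Bigl(\frac{1+z}{1-z}\Bigr)^{k+1}=\sum_{j=0}^{k}\binom{k}{j}\frac{2^j}{j+1}\Bigl(\frac{z}{1-z}\Bigr)^{j+1},
\end{equation*}
is false: at $k=0$ the left side is $z/(1-z)^2$ while the right side is $z/(1-z)$, and in general the right side sums to $\tfrac{1}{2(k+1)}\bigl[((1+z)/(1-z))^{k+1}-1\bigr]$, not the right-hand side of \eqref{eq:GUE_gf}. The underlying issue is a normalisation: as the $k=0$ case and Remark \ref{re:randHerm} both show, the rational function in \eqref{eq:GUE_gf} actually generates $nQ_k(n)=\mathbb{E}[\mathrm{tr}\,X_n^{2k}]$ rather than $Q_k(n)$, so blind coefficient extraction from the formula as printed lands you a factor of $n$ away from the target. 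Carrying out your substitution $w=2z/(1-z)$ honestly gives $\tfrac{1}{4}w(2+w)(1+w)^k=\sum_j\binom{k}{j}2^j(u^{j+1}+u^{j+2})$ with $u=z/(1-z)$, whence $[z^n]$ yields $\sum_j\binom{k}{j}2^j\binom{n}{j+1}$ for $nQ_k(n)/(2k-1)!!$; only after writing $\binom{n}{j+1}=\tfrac{n}{j+1}\binom{n-1}{j}$ and cancelling the $n$ does your sum --- and the $1/(j+1)$ --- appear. The route is therefore salvageable, but the identity $\binom{k+1}{j+1}/(k+1)=\binom{k}{j}/(j+1)$ plays no role in it, and as written the opening manipulation would fail. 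Since your alternative argument is complete, the theorem is still proved, but the generating-function derivation should either be repaired as above or dropped.
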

The $2k^{th}$ moment is a Meixner polynomial of degree $k$, as observed in \cite{Cunden19}.  Therefore the three-term recurrence and generating function can be explained as immediate consequences of the properties of the hypergeometric orthogonal polynomials.
\begin{rmk} \label{re:randHerm}
The generating function \eqref{eq:GUE_gf} can also be viewed as a randomisation of $n$,  the number of particles.  That is,  if we take the normalised moments $Q_k(n)$ and choose $(n-1)$ randomly according to a Negative Binomial distribution with parameters $(2,z)$,  we have that
\begin{equation}
\sum_{n \geq 1} Q_k(n) n (1-z)^2 z^{n-1} = (2k-1)!! \left( \frac{1+z}{1-z} \right)^k.
\end{equation}
\end{rmk}
Noting that the right hand side gives the $k$th moment of a Gaussian random variable with mean 0 and variance $(1+z)(1-z)^{-1}$, we therefore have the surprising fact that the one-point function of the GUE, when the number of particles is randomised in a certain way, is itself Normally distributed.
The equivalence of these probability densities can also be seen as a rewriting of Mehler's formula, 
\begin{equation}
\sum_{n \geq 0} \frac{ (\rho / 2)^n}{n!} H_n(x) H_n(y) = \frac{1}{\sqrt{1-\rho^2}} 
\exp{ \left( - \frac{ \rho^2(x^2 + y^2) - 2 \rho x y}{(1-\rho^2)} \right) }
\end{equation}
which is therefore providing another proof of Theorem \ref{th:HZ_gf}.
\subsection{Laguerre ensemble}
Consider an $(N+\alpha) \times N$ matrix $X = (X_{i,j})$ for some nonnegative integer $\alpha$, with independent identically distributed random complex Gaussian entries, so that the matrix $X$ is chosen according to the probability measure
\begin{equation}
P_N(X) \mathrm{d} X = \prod_{i,j=1}^N \frac{1}{\pi} e^{-|x_{ij}|^2} \mathrm{d} \Re x_{ij} \mathrm{d} \Im x_{ij},
\end{equation}
and define the $N \times N$ matrix
\begin{equation}
M = X^{\dagger} X.
\end{equation}
Then a matrix of the form $M$ is known as a complex Wishart matrix with parameter $\alpha$, and these matrices are said to belong the \emph{Laguerre Unitary Ensemble}, or LUE, as the measure is invariant under conjugation by unitary matrices. The distribution of the eigenvalues of $M$ is a probability measure on $[0,\infty)^N$
\begin{equation} \label{eq:Laguerre_ens}
\mathrm{d} Q_N(\lambda_1, \dots, \lambda_N) = \frac{1}{N! \prod_{i=0}^{N-1} (i+\alpha)! i!} \Delta_N (\lambda_1, \dots, \lambda_N)^2 \prod_{j=1}^N e^{-\lambda_j} \lambda_j^{\alpha} \mathrm{d} \lambda_j,
\end{equation}
which is an orthogonal polynomial ensemble \eqref{eq:dQ} with $\mu$ an Exponential measure with parameter $\alpha$.  The Laguerre polynomials are orthogonal with respect to a Gamma measure with parameters $(\alpha,1)$ on the positive half-line $[0,\infty)$,
\begin{equation}
\mathrm{d} \mu(x) = \frac{ x^{\alpha} e^{- x}}{\Gamma(\alpha+1)} \mathrm{d} x,
\end{equation}
and the $n^{th}$ normalised Laguerre polynomial $l_n(x; \alpha)$ can be written as \cite[9.12]{Koekoek10}
\begin{equation} \label{eq:Laguerre}
l_n(x; \alpha) = \frac{ (\alpha+1)_n}{\sqrt{n! \Gamma(n+\alpha+1)}} {}_1F_1 \left( \begin{matrix} -n,  \\ \alpha+1 \end{matrix} ; -x \right). 
\end{equation}
Hence the measure on the eigenvalues of the LUE can be called the \emph{Laguerre ensemble}. 
\begin{thm}[Hanlon-Stanley-Stembridge \cite{Hanlon92}] \label{th:Hanlon}
The expectation of a Schur polynomial under the Laguerre ensemble (with parameter $\alpha$) is given in terms of Schur polynomials.
\begin{equation}
\mathbb{E}_Q[s_{\lambda}(x_1, \dots, x_n)] = H_{\lambda} s_{\lambda}(1^n) s_{\lambda}(1^{n+\alpha}) 
\end{equation}
where $H_{\lambda}$ is given by the hook-length formula.
\end{thm}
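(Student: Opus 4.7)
The plan is to compute the expectation by the standard orthogonal polynomial / Andréief approach, reducing the integral to a determinant of Gamma functions, and then to recognise the resulting ratio as the hook-content formula in disguise.

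First I would invoke the bialternant formula
\[
s_{\lambda}(x_1,\dots,x_n) = \frac{\det\bigl(x_i^{\lambda_j+n-j}\bigr)_{i,j=1}^n}{\Delta_n(x)},
\]
so that $\Delta_n(x)^2 s_\lambda(x) = \det(x_i^{j-1}) \det(x_i^{\lambda_j+n-j})$. Plugging this into the density \eqref{eq:Laguerre_ens} and applying the Andréief (Heine) identity to the two determinants against the product measure $\prod_j e^{-x_j} x_j^\alpha \mathrm{d}x_j$ on $[0,\infty)^n$ collapses the $n$-fold integral into a single determinant of one-variable moments. Since $\int_0^\infty x^{m+\alpha} e^{-x}\mathrm{d}x = \Gamma(m+\alpha+1)$, this gives
\[
\mathbb{E}_Q\bigl[s_\lambda\bigr] = \frac{\det\bigl[\Gamma(\lambda_j + n - j + i + \alpha)\bigr]_{i,j=1}^n}{\det\bigl[\Gamma(i+j-1+\alpha)\bigr]_{i,j=1}^n},
\]
where the denominator is the value at $\lambda = \varnothing$ (equivalently the Selberg-type normalising constant $\prod_{i=0}^{n-1} i!(i+\alpha)!$).

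Next I would factor each Gamma value as
\[
\Gamma(\lambda_j + n - j + i + \alpha) = \Gamma(i + n - j + \alpha) \cdot (i + n - j + \alpha)_{\lambda_j},
\]
pull the $j$-dependent Pochhammer factors out of the determinant by column operations, and reduce the remaining determinant to a shifted Vandermonde in the parts $\ell_j := \lambda_j + n - j$. Dividing by the analogous factorisation of the normalising determinant yields a product over boxes of $\lambda$: the Pochhammer factors $(n - j + \alpha + 1)_{\lambda_j}$ and $(n - j + 1)_{\lambda_j}$ combine, by the standard content identity, into $s_\lambda(1^n) s_\lambda(1^{n+\alpha})$ up to a purely combinatorial factor, while the ratio of the shifted Vandermonde determinants supplies the hook product $H_\lambda = \prod_{(i,j)\in\lambda} h(i,j)^{-1}$ (or its square, depending on the convention used in~\cite{Hanlon92}).

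The main obstacle will be the last step: correctly bookkeeping the Vandermonde ratio and the Pochhammer factors so that they reassemble into the hook-content form $H_\lambda s_\lambda(1^n) s_\lambda(1^{n+\alpha})$ rather than into some other equivalent hypergeometric expression. The cleanest way to manage this is to verify the equality box-by-box via the hook-content formula $s_\lambda(1^N) = \prod_{(i,j)\in\lambda} (N + j - i)/h(i,j)$, applied once with $N = n$ and once with $N = n + \alpha$. Everything else in the computation is essentially an application of Heine's identity and elementary Gamma-function manipulations.
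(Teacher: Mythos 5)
The paper does not actually prove this theorem: it is quoted from Hanlon--Stanley--Stembridge \cite{Hanlon92} and used as a black box, so there is no in-text argument to compare against. Your Andr\'eief route is a correct and standard way to establish it. The bialternant formula, Andr\'eief's identity against the weight $e^{-x}x^{\alpha}$, the evaluation $\int_0^{\infty}x^{i-1}x^{\lambda_j+n-j}e^{-x}x^{\alpha}\,\mathrm{d}x=\Gamma(\lambda_j+n-j+i+\alpha)$, the factorisation $\Gamma(\ell_j+i+\alpha)=\Gamma(\ell_j+\alpha+1)\,(\ell_j+\alpha+1)_{i-1}$ with $\ell_j=\lambda_j+n-j$, and the reduction of $\det\bigl((\ell_j+\alpha+1)_{i-1}\bigr)$ to $\prod_{i<j}(\ell_i-\ell_j)$ all go through, and the denominator is indeed the $\lambda=\varnothing$ case, matching $Z_n/n!=\prod_{i=0}^{n-1}i!\,(i+\alpha)!$. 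The one correction concerns exactly the step you flag as the main obstacle: the Vandermonde ratio $\prod_{i<j}(\ell_i-\ell_j)\big/\prod_{i<j}(j-i)$ is precisely $s_{\lambda}(1^n)$ (Weyl dimension formula), while the surviving Gamma ratio $\prod_{j}(n-j+\alpha+1)_{\lambda_j}$ is the content product $\prod_{(i,j)\in\lambda}(n+\alpha+j-i)$, which by hook-content equals $s_{\lambda}(1^{n+\alpha})\prod_{(i,j)\in\lambda}h_{\lambda}(i,j)$. Hence $H_{\lambda}$ in the statement is the \emph{product} of the hook lengths, not its inverse and not a square; equivalently $\mathbb{E}_Q[s_{\lambda}]=\prod_{(i,j)\in\lambda}(n+j-i)(n+\alpha+j-i)/h_{\lambda}(i,j)$, as a quick check with $\lambda=(2)$, $n=1$, $\alpha=0$ (where $\mathbb{E}[x^2]=2=H_{(2)}$) confirms. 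With that constant pinned down, your argument is complete and supplies a proof the paper itself omits.
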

Therefore by the symmetric functions formula relating power sums to Schur polynomials,  we have the following result immediately (since the power sums of the variables $(x_1, \dots,  x_n)$ are simply the traces of the powers of the matrix).
\begin{cor}[Hanlon-Stanley-Stembridge \cite{Hanlon92}]
The moments of the Laguerre ensemble one-point function are given by
\begin{equation}
\mathbb{E}[\text{tr} X_{n,n+\alpha}^k] = \sum_{\beta} H_{\beta} \chi^{\beta}(\lambda) s_{\beta}(1^n) s_{\beta}(1^{n+\alpha}) 
\end{equation}
where the $\chi^{\beta}(\lambda)$ are characters which can be computed by the Murnaghan-Nakayama rule.
\end{cor}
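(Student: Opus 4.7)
The plan is to derive the formula by combining Theorem \ref{th:Hanlon} with the classical expansion of power sum symmetric functions in the Schur basis. First, I would observe that for any partition $\mu = (\mu_1, \mu_2, \dots)$, a product of traces of powers of $M = X_{n,n+\alpha}^\dagger X_{n,n+\alpha}$ equals the corresponding power sum evaluated at the eigenvalues $\lambda_1, \dots, \lambda_n$:
\[
\prod_i \text{tr}(M^{\mu_i}) \;=\; p_\mu(\lambda_1, \dots, \lambda_n),
\]
so in particular $\text{tr}(M^k) = p_{(k)}(\lambda_1, \dots, \lambda_n)$ corresponds to the single-part partition $\mu = (k)$.

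Next I would invoke the standard change-of-basis identity from the representation theory of the symmetric group,
\[
p_\mu \;=\; \sum_{\beta \,\vdash\, |\mu|} \chi^\beta(\mu)\, s_\beta,
\]
where $\chi^\beta(\mu)$ denotes the value of the irreducible character of $S_{|\mu|}$ indexed by $\beta$ on any permutation of cycle type $\mu$. Applying this identity with $\mu = (k)$, taking the expectation under the Laguerre ensemble, and passing the expectation inside the finite Schur expansion by linearity reduces the calculation to evaluating each $\mathbb{E}_Q[s_\beta(\lambda_1, \dots, \lambda_n)]$. By Theorem \ref{th:Hanlon} each such expectation equals $H_\beta\, s_\beta(1^n)\, s_\beta(1^{n+\alpha})$, and the claimed formula follows directly (the symbol $\lambda$ in the statement playing the role of the cycle type $\mu$ used above).

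There is essentially no substantive obstacle: the corollary truly is immediate from Theorem \ref{th:Hanlon} once the classical power-sum-to-Schur transition is recalled. The only point worth flagging is bookkeeping of notation, since $\lambda$ is used in the statement for a partition indexing the power sum, while earlier in the paper the same letter denoted eigenvalues. The reference to the Murnaghan-Nakayama rule in the statement is not an additional step in the proof but merely a pointer to the standard combinatorial procedure for computing $\chi^\beta(\mu)$ recursively via ribbon tableaux, and does not require any further argument.
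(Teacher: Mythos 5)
Your proposal is correct and follows exactly the paper's route: the paper likewise obtains the corollary immediately from Theorem \ref{th:Hanlon} by expanding the power sum $\mathrm{tr}\,M^k = p_{(k)}(\lambda_1,\dots,\lambda_n)$ in the Schur basis via the character change-of-basis formula and applying linearity of expectation. Your remarks on the notational overloading of $\lambda$ and on the Murnaghan--Nakayama rule being merely a computational pointer are both accurate and consistent with the paper's treatment.
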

Let us denote the $k^{th}$ moment of the Laguerre ensemble by
\begin{equation}
Q_k(n; \alpha) := \frac{\mathbb{E}[\text{tr} X_{n, n+\alpha}^k ]}{n}.
\end{equation}
Then the following results about the moments are known.
\begin{thm}[Haagerup-Thorbj\o rnsen \cite{Haagerup03}]
The moments of the Laguerre ensemble $Q_k(n;\alpha)$ satisfy the following three-term recurrence in $k$,
\begin{equation} \label{eq:HT_rec}
(k+2) Q_{k+1}(n; \alpha) = (2k+1)(2n+ \alpha) Q_k(n ; \alpha) + (k-1)(k^2 - \alpha^2) Q_{k-1}(n;\alpha).
\end{equation} 
\end{thm}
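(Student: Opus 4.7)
The plan is to follow the programme set out in \cite{Cunden19} for the classical unitary ensembles: first derive a closed hypergeometric representation of $Q_k(n;\alpha)$ as a polynomial in $k$, then recognise it as a member of the Askey scheme, so that the standard three-term recurrence for that family specialises to \eqref{eq:HT_rec}.

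Concretely, starting from the orthogonal polynomial form \eqref{eq:onept} of the one-point function applied to the Laguerre ensemble \eqref{eq:Laguerre_ens}, I would write
\begin{equation*}
Q_k(n;\alpha)=\frac{1}{n\,\Gamma(\alpha+1)}\int_0^\infty x^{k+\alpha}e^{-x}\sum_{j=0}^{n-1}l_j(x;\alpha)^2\,dx,
\end{equation*}
substitute the hypergeometric representation \eqref{eq:Laguerre} into each Laguerre polynomial, compute the resulting Gamma integrals via $\int_0^\infty x^{k+\alpha+r}e^{-x}dx=\Gamma(k+\alpha+r+1)$, and collect the Pochhammer symbols. After reordering and carrying out the inner sum via a suitable finite hypergeometric summation (of Chu-Vandermonde type), the triple sum should collapse to a single balanced ${}_3F_2$ in $k$, of the shape
\begin{equation*}
Q_k(n;\alpha)=C(n,\alpha,k)\;{}_3F_2\!\left(\begin{matrix}-k,\;k+1,\;1-n\\ 2,\;\alpha+1\end{matrix};1\right),
\end{equation*}
with $C(n,\alpha,k)$ an explicit ratio of Pochhammer symbols. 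This is the Laguerre analogue of the Witte-Forrester formula, and it identifies $Q_k(n;\alpha)$ as a continuous Hahn (equivalently, suitably normalised Meixner) polynomial in the variable $k$.

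Step two is then formal: invoke the known three-term recurrence for that Askey-scheme polynomial, as listed in \cite{Koekoek10}, and substitute the parameters determined by $n$ and $\alpha$. The right-hand side of the general recursion should reduce exactly to $(2k+1)(2n+\alpha)$ and $(k-1)(k^2-\alpha^2)$, with a leading factor $(k+2)$, matching \eqref{eq:HT_rec}.

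The main obstacle is the first step: turning the triple sum (two from the hypergeometric expansions of the two factors of $l_j(x;\alpha)$, together with the outer sum over $j$ from $0$ to $n-1$) into a single ${}_3F_2$. The trick is to re-index so that the $j$-sum becomes an evaluable hypergeometric series depending only on $n$, after which the remaining double sum can be recognised as the desired $_3F_2$. Once the closed form is established, the identification with an Askey polynomial and the extraction of the three-term recurrence are essentially bookkeeping, and the recursion follows by matching leading coefficients.
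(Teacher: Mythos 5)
The paper itself offers no proof of this theorem---it is imported verbatim from Haagerup--Thorbj\o rnsen---but the route it gestures at (Theorem \ref{th:HSS} plus the remark that \eqref{eq:HT_rec} is ``an immediate consequence of the recurrence in $k$ satisfied by the Hahn polynomials'', following \cite{Cunden19}) is exactly the programme you describe, and that programme does work. Your step one is sound in outline: expanding the sum of squared Laguerre polynomials in the one-point function and collapsing the resulting triple sum does produce a terminating ${}_3F_2$; the paper's version, \eqref{eq:LUE_mom}, has upper parameters $1-k,\,k+2,\,1-n$ and lower parameters $2,\,2+\alpha$ with prefactor $(n+\alpha)(k+\alpha)!/(1+\alpha)!$, a contiguous relative of the form you guessed (your parameters check out numerically at $k=1,2$ with $C=(k+\alpha)!/\alpha!$, so either normalisation can be made to work).

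The one concrete error is in step two. The ${}_3F_2$ in \eqref{eq:LUE_mom} is a \emph{Hahn} polynomial $S_{k-1}(n-1;1,1,-2-\alpha)$ of degree $k-1$ in the variable $n-1$ (equivalently a dual Hahn polynomial $R_{n-1}\bigl((k-1)(k+2);1,1,-2-\alpha\bigr)$ in the quadratic variable of $k$), not a continuous Hahn or Meixner polynomial---Meixner is the GUE case, as the paper notes. If you substitute into the continuous Hahn or Meixner recurrence you will not recover \eqref{eq:HT_rec}; you must use the three-term recurrence in the \emph{degree} of the discrete Hahn family, applied at degree $k-1$, and then divide through by the $k$-dependent prefactor $(k+\alpha)!$, which is what generates the factors $(k+2)$ and $(k-1)(k^2-\alpha^2)$. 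Two further points worth recording: the Hahn parameter $-2-\alpha$ lies outside the usual orthogonality range, but the recurrence is an algebraic identity of the terminating series and holds regardless; and the original Haagerup--Thorbj\o rnsen proof is a genuinely different argument, deriving a differential equation for the Laplace transform of the one-point density and reading off \eqref{eq:HT_rec} from its coefficients, so your proposal should be credited to the \cite{Cunden19} point of view rather than to the cited source.
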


See \cite{Koekoek10} for the definitions of the dual Hahn polynomials $R_n(x(x+\gamma+\delta+1); \gamma, \delta, N)$ and Hahn polynomials $S_n(x; \alpha, \beta, N)$.

\begin{thm}[Cunden-Mezzadri-O'Connell-Simm \cite{Cunden19}]
\label{th:HSS}
The $k^{th}$ moment of the Laguerre ensemble one-point function is given by the following hypergeometric series,  which can also be written as a dual Hahn polynomial of degree $n-1$, or as a Hahn polynomial of degree $k-1$.
\begin{align} \label{eq:LUE_mom}
Q_k(n; \alpha)
&= \frac{ (n+\alpha) (k+\alpha)!}{(1+\alpha)!} 
{}_3F_2 \left( \begin{matrix} 1-k ,  , 2+k, 1-n  \\ 2, 2+\alpha \end{matrix} ; 1 \right)
\nonumber \\
&= \frac{ (n+\alpha) (k+\alpha)!}{(1+\alpha)!}  R_{n-1} \left( (k-1)(k+2); 1, 1, -2-\alpha \right) \\
&= \frac{ (n+\alpha) (k+\alpha)!}{(1+\alpha)!} S_{k-1} \left( n-1; 1, 1, -2-\alpha \right).
\end{align}
\end{thm}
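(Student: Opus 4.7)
The plan is to derive the ${}_3F_2$ identity from the Hanlon--Stanley--Stembridge expansion of Corollary~2.7, and then match parameters against the standard Askey-scheme hypergeometric representations of the dual Hahn and Hahn polynomials in \cite{Koekoek10}.

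First, I would expand $\mathrm{tr}\,M^k = p_k(\lambda_1,\dots,\lambda_n)$ in the Schur basis via
\begin{equation*}
p_k = \sum_{\beta \vdash k} \chi^\beta\bigl((k)\bigr)\, s_\beta.
\end{equation*}
The Murnaghan--Nakayama rule forces the only nonvanishing characters to come from hook partitions $\beta = (k-j, 1^j)$ with $0 \le j \le k-1$, in which case $\chi^{(k-j,1^j)}((k)) = (-1)^j$. Substituting into Corollary~2.7 reduces the moment to a single sum over hooks,
\begin{equation*}
\mathbb{E}[\mathrm{tr}\,M^k] = \sum_{j=0}^{k-1} (-1)^j\, H_{(k-j,1^j)}\, s_{(k-j,1^j)}(1^n)\, s_{(k-j,1^j)}(1^{n+\alpha}).
\end{equation*}

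Second, I would evaluate the Schur polynomials at $1^N$ using the hook-content formula $s_\lambda(1^N) = \prod_{(i,j)\in\lambda}(N+j-i)/h(i,j)$, which for the hook $(k-j,1^j)$ gives
\begin{equation*}
s_{(k-j,1^j)}(1^N) = \frac{N}{k}\binom{N+k-j-1}{k-j-1}\binom{N-1}{j}.
\end{equation*}
Inserting this with $N=n$ and $N=n+\alpha$, combining with the hook-length prefactor $H_{(k-j,1^j)}$, converting the four resulting binomials into ratios of Pochhammer symbols, and dividing by $n$ to pass from the trace moment to $Q_k(n;\alpha)$, the hook sum collapses after systematic cancellation into
\begin{equation*}
Q_k(n;\alpha) = \frac{(n+\alpha)(k+\alpha)!}{(1+\alpha)!}\sum_{j=0}^{k-1}\frac{(1-k)_j(2+k)_j(1-n)_j}{(2)_j(2+\alpha)_j\,j!},
\end{equation*}
which is the claimed terminating ${}_3F_2$.

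Third, I would identify this series with the dual Hahn and Hahn representations in \cite{Koekoek10}. The dual Hahn formula
\begin{equation*}
R_m\bigl(x(x+\gamma+\delta+1);\gamma,\delta,M\bigr) = {}_3F_2\!\left(\begin{matrix}-m, -x, x+\gamma+\delta+1 \\ \gamma+1, -M\end{matrix};1\right)
\end{equation*}
reproduces our sum under $m=n-1$, $x=k-1$, $\gamma=\delta=1$, $M=-2-\alpha$, since then $x(x+\gamma+\delta+1)=(k-1)(k+2)$; the analogous specialisation of the Hahn ${}_3F_2$ with $m=k-1$ in the variable $n-1$ supplies the third equality. The fact that the same hypergeometric sum admits both interpretations is the classical Hahn/dual Hahn duality.

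The main obstacle is the combinatorial bookkeeping of the second step: one must track the hook-length coefficient and two Schur evaluations simultaneously, convert a product of four binomial coefficients into a ratio of Pochhammer symbols with the precise shifts $(1-k)_j,(2+k)_j,(1-n)_j$ and $(2)_j,(2+\alpha)_j$, and verify that the $k$-, $n$- and $\alpha$-dependence collects into exactly the prefactor $(n+\alpha)(k+\alpha)!/(1+\alpha)!$. A secondary subtlety is the identification in the third step: the parameter $M=-2-\alpha$ lies outside the classical positivity range of the Askey scheme, so the equality between our sum and the (dual) Hahn polynomial is to be read as an analytic continuation of the ${}_3F_2$ representation rather than an immediate statement about a positive discrete orthogonality measure.
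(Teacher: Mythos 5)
Correct, and essentially the paper's own approach: the paper states this theorem as a cited result from \cite{Cunden19} without reproving it, but the route you take --- Murnaghan--Nakayama reduction of $p_k$ to hook shapes, the Hanlon--Stanley--Stembridge Schur expectation, hook-content evaluation of $s_{(k-l,1^l)}(1^N)$, and identification of the resulting terminating ${}_3F_2$ with the Askey-scheme dual Hahn/Hahn representations --- is precisely the strategy the paper itself carries out in the $q$-deformed setting (proof of Theorem~\ref{th:qLag}), which it explicitly describes as ``following the proof of Theorem~\ref{th:HSS}''. Your parameter identifications ($m=n-1$, $x=k-1$, $\gamma=\delta=1$, $M=-2-\alpha$) and the caveat about analytic continuation outside the positivity range are both correct.
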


As shown in \cite{Cunden19}, the three-term recurrence \eqref{eq:HT_rec} can be explained as an immediate consequence of the recurrence in $k$ satisfied by the Hahn polynomials.  The dual Hahn polynomials satisfy several different generating functions, which therefore give rise to generating functions of the Laguerre ensemble moments. For example, they satisfy
\begin{equation} \label{eq:ctsHahn_gf}
\sum_{n \geq 0} R_n(x(x+\gamma+\delta+1) ; \gamma,\delta, N) \frac{ (\gamma+1)_n (-N)_n}{(-\delta-N)_n n!} t^n = (1-t)^x {}_2F_1 \left( \begin{matrix} x-N ,  x+\gamma+1  \\ -\delta-N \end{matrix} ; t \right).
\end{equation}
\begin{rmk} \label{re:randLag}
As in the Hermite ensemble, this generating function can be viewed as a randomisation of $n$, the number of particles.  If we choose $(n-1)$ randomly according to a Negative Binomial distribution with parameters $(2,z)$,  we have that
\begin{equation} \label{eq:LUE_gf}
\sum_{n \geq 1} Q_k(n;\alpha) n (1-z)^2 z^{n-1} = \frac{ (k+\alpha)!} {\alpha!} (1-z)^{k+1} {}_2F_1 \left( \begin{matrix} 1+k ,  1+k+\alpha  \\ 1+\alpha \end{matrix} ; z \right).
\end{equation}
\end{rmk}
\section{Preliminaries} \label{se:Pre}
There is a natural extension of the theory of hypergeometric series using $q$-series, to define what are known as basic hypergeometric series. We shall use the convention that the $q$-analogue of an integer $n$ is defined by 
\begin{equation}
[n]_q := \frac{ 1-q^n}{1-q}
\end{equation}
which clearly converges to $n$ as $q \to 1$.  Similarly we can define the $q$-analogue of the factorial
\begin{equation}
[n]_q! := \prod_{i=1}^{n} [i]_q = \frac{ (1-q^n)(1-q^{n-1}) \dots (1-q)}{(1-q)^n}.
\end{equation}
\subsection{Basic hypergeometric series}
The $q$-Pochhammer symbol is denoted
\begin{equation} \label{eq:qPoch}
(a; q)_n = (1-a)(1-aq) \dots (1-aq^{n-1}) = \prod_{i=0}^{n-1} (1-aq^i)
\end{equation}
and converges to the classical Pochhammer symbol when $a$ is a power of $q$, in the following way:
\begin{equation} \label{eq:qPoch_conv}
\frac{(q^j;q)_n }{(1-q)^n} = \frac{ [n+j-1]_q !}{[j-1]_q!} \to \frac{ (n+j-1)!}{(j-1)!} = (j)_n \quad \quad \text{as } \; q \to 1.
\end{equation}
The basic hypergeometric series is defined by
\begin{equation} \label{eq:basichyp}
 {}_r \phi_s \left( \begin{matrix} a_1, \dots , a_r \\ b_1 , \dots , b_s \end{matrix} ; q; z \right)=\sum_{i=0}^{\infty}\frac{(a_1; q)_i\cdots (a_r; q)_i}{(b_1; q)_i\cdots (b_s; q)_i}\frac{z^i}{(q; q)_i} \left( (-1)^i q^{i(i-1)/2} \right)^{1+s-r}.
\end{equation}
If one of the parameters $a_1, \dots, a_r$ is a non-positive power of $q$,  say $q^{-n}$ for some integer $n > 0$, then the sum terminates, since the $q$-Pochhammer symbol
\begin{equation}
(q^{-n}; q)_i = 0 \quad \quad \text{ for all } \quad i > n.
\end{equation}
Furthermore, if all the $a_1, \dots, a_r, b_1, \dots b_s$ are powers of $q$, then it is clear by \eqref{eq:qPoch_conv} that 
\begin{equation}
{}_r \phi_s \left( \begin{matrix} q^{c_1}, \dots , q^{c_r} \\ q^{d_1} , \dots , q^{d_s} \end{matrix} ; q; (q-1)^{1+s-r} z \right)  \to
{}_rF_s \left( \begin{matrix} c_1, \dots, c_r \\ d_1, \dots d_s \end{matrix}; z \right).
\end{equation}
\subsection{Jackson integrals} \label{se:Jackson}
The study of $q$-series and basic hypergeometric series is closely linked to that of $q$-calculus, or quantum calculus \cite{KacCheung},  which is often referred to as ``calculus without limits".  The $q$-analogue of the derivative of a function $f: \mathbb{R} \to \mathbb{R}$ is simply given by
\begin{equation}
\mathbb{D}_q f(x) := \frac{ f(x) - f(qx)}{ x - qx }
\end{equation}
which is known simply as the $q$-derivative.  It is clear that in the limit as $q \to 1$, this agrees with the classical derivative, and that the $q$-analogue of the integers play a similar role in $q$-calculus. For example
\begin{equation}
\mathbb{D}_q x^n = \frac{ x^n - (qx)^n}{ x - xq} = \frac{ (1-q^n)x^n}{ (1-q)x} = [n]_q x^{n-1}.
\end{equation}
Without going into details about uniqueness, it is clear that a good candidate for a $q$-anti-derivative is the so-called \emph{Jackson integral}:
\begin{equation}
\int f(x) \mathrm{d}_q x = \sum_{n \geq 0} (1-q) xq^n f(xq^n),
\end{equation}
from which we can define the \emph{definite Jackson integral}. For any function $f:[0,a] \to \mathbb{R}$, 
\begin{equation}
\int_0^a f(x) \mathrm{d}_q x := \sum_{n \geq 0} (1-q) aq^n f(aq^n).
\end{equation}
Intuitively,  this can be thought of formally as a type of infinite Riemann sum, where the $n^{th}$ rectangle has width~$(aq^n - aq^{n+1})$ and height $f(aq^n)$.  The definition extends to more general intervals as follows. For any function $f:[a,b] \to \mathbb{R}$, 
\begin{equation}
\int_a^b f(x) \mathrm{d}_q x = \int_0^b f(x) \mathrm{d}_q x - \int_0^a f(x) \mathrm{d}_q x
\end{equation}
which is well-defined for any $a < b$ by the convention that
\begin{equation}
\int_{-a}^0 f(x) \mathrm{d}_q x := \int_0^a f(-x) \mathrm{d}_q x  \quad \quad \text{ for } a > 0.
\end{equation}
The improper Jackson integral $\int_0^{\infty} f(x) \mathrm{d}_q x$ clearly cannot be defined by simply taking the limit $\lim_{a \to \infty} \int_0^a f(x) \mathrm{d}_q x$.  Instead, we make use of the fact that the positive half-line $[0,\infty)$ can be partitioned into intervals of the form $[q^{n+1}, q^n)$ for all $n \in \mathbb{Z}$. Hence for any function $f:[0,\infty] \to \mathbb{R}$, we can define
\begin{align}
\int_0^{\infty} f(x) \mathrm{d}_q x &= \sum_{n = -\infty}^{\infty} \int_{q^{n+1}}^{q^n} f(x) \mathrm{d}_q x 
\nonumber \\
&= \sum_{n = -\infty}^{\infty} (1-q)q^n f(q^n).
\end{align}
\section{Discrete $q$-Hermite ensemble} \label{se:qHerm}
We consider a rescaled version of the Discrete $q$-Hermite I polynomials \cite[14.28]{Koekoek10} defined as follows.  First, we define the reference measure $g(x;q)$ on the set $\{ \pm \nu q^j ; \; j \in \mathbb{Z}_+ \}$ where $\nu = \frac{1}{\sqrt{1-q}}$:
\begin{equation}
g( \{ x \} ; q) = \sum_{n = 0}^{\infty} \frac{ \nu q^n \mathbf{1}_{\nu q^n}(x) + \nu q^n \mathbf{1}_{- \nu q^n}(x)}{ (q;q)_n (-q;q)_n (-1;q)_{\infty}},
\end{equation}
which is defined as a Jackson integral for suitable test functions $f$
\begin{equation}
\int_{[-\nu,\nu]} f(x) \mathrm{d} g(x; q) =
\frac{\int_{-\nu }^{\nu} f\left( \frac{x}{\nu} \right) \left( \frac{ qx}{\nu};q \right)_{\infty} \left( -\frac{ qx}{\nu} ;q\right)_{\infty} \mathrm{d}_q x}
{(1-q) (q;q)_{\infty} (-q;q)_{\infty} (-1;q)_{\infty}} .
\end{equation}
This choice of $g$ is a natural $q$-analogue of the Gaussian distribution, and is usually referred to as the Gaussian $q$-distribution \cite{BozejkoYoshida, DiazTeruel05,DiazPariguan09}, as it converges to the Gaussian measure on $\mathbb{R}$ as $q \to 1$. Crucially, its odd moments are 0, and even moments are given by
\begin{equation}
\int x^{2k} \mathrm{d} g(x; q^2) = [2k-1]_{q^2} !!
\end{equation}
so this makes it a suitable $q$-analogue for considering moments. We thus have the rescaled $q$-Hermite polynomials which are orthogonal to the measure $g(x;q^2)$ defined by
\begin{equation}
\tilde{H}_n(x;q^2) = H_n\left( \frac{ x}{\nu} ; q^2 \right) = \frac{ q^{\frac{n(n-1)}{2}}}{ \sqrt{(q^2; q^2)_n}} \;
{}_2 \phi_1 \left( \begin{matrix} q^{-2n},  \frac{\nu}{x}  \\ 0 \end{matrix} ;q^2,  - \frac{ q^2 x}{ \nu } \right),
\end{equation}
and the $q$-Hermite orthogonal polynomial ensemble has normalised one-point function given by
\begin{equation}
\mathrm{d} \rho_N(x) = \frac{1}{N} \sum_{n=0}^{N-1} \tilde{H}_n(x; q^2)^2 \mathrm{d} g(x; q^2).
\end{equation}
We wish to consider the moments of this normalised one-point function. As in the classical Hermite ensemble, or GUE, the odd moments are 0 by symmetry, so we are only concerned with the even moments. We denote the $2k^{th}$ moment by
\begin{equation}
M(k,N) = \int_{- \nu}^{\nu} x^{2k} \mathrm{d} \rho_N(x) .
\end{equation}
We note that if $(X_1, \dots, X_N)$ is sampled from the $q$-Hermite ensemble, then 
\begin{equation}
M(k,N) = \mathbb{E}_Q \left[ \frac{1}{N} \sum_{i=1}^N X_i^{2k} \right].
\end{equation}
Morozov, Popolitov and Shakirov \cite{Morozov20} obtained the following remarkable formula for the generating function of the unnormalised moments. When $q \to 1$, this reduces to the Harer-Zagier generating function \eqref{eq:GUE_gf}.
\begin{thm}[Morozov-Popolitov-Shakirov (2020)]
The generating function of the $2k^{th}$ moment of the $q$-Hermite ensemble one-point function is
\begin{equation} \label{eq:qHerm_mom_gf}
\sum_{N \geq 0} \lambda^N \frac{  M(k,N) N }
{q^{k(1-k)} [2k-1]_{q^2} !!} =
 \frac{ q^k \lambda^{k+1} (-q^2 \lambda^{-1}; q^2)_k} {(q^{2k}-\lambda) (\lambda; q^2)_{k+1}}.
\end{equation}
\end{thm}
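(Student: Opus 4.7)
The plan is to reduce the generating function in $N$ to a generating function in the polynomial degree $n$ via the orthogonal polynomial method, and then to apply the relevant identity of Morozov--Popolitov--Shakirov followed by standard $q$-series manipulations.

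First, from the definition of $\mathrm{d}\rho_N$ we have
\begin{equation*}
N \cdot M(k,N) \;=\; \sum_{n=0}^{N-1} a_n(k), \qquad a_n(k) \;:=\; \int_{-\nu}^{\nu} x^{2k}\, \tilde{H}_n(x;q^2)^2 \, \mathrm{d} g(x;q^2).
\end{equation*}
Interchanging the order of summation in $N$ and $n$ and using the geometric series,
\begin{equation*}
\sum_{N \geq 0} \lambda^N \, N \cdot M(k,N) \;=\; \sum_{n \geq 0} a_n(k) \sum_{N \geq n+1} \lambda^N \;=\; \frac{\lambda}{1-\lambda}\sum_{n \geq 0} a_n(k)\, \lambda^n,
\end{equation*}
so the task reduces to computing the generating function $\sum_{n \geq 0} a_n(k)\lambda^n$ in closed form and verifying that, after multiplication by $\lambda/(1-\lambda)$ and division by the normalisation $q^{k(1-k)}[2k-1]_{q^2}!!$, the right-hand side of \eqref{eq:qHerm_mom_gf} emerges.

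Second, I would invoke the formula of Morozov, Popolitov and Shakirov \cite{Morozov20}, which provides an explicit expression for the diagonal matrix elements $a_n(k)$ (equivalently, for the associated Jackson integrals of $x^{2k}$ against the squared discrete $q$-Hermite polynomials) as a terminating basic hypergeometric sum in $n$ and $k$. Substituting this expression inside $\sum_{n \geq 0} a_n(k)\lambda^n$ produces a double sum over $n$ and an auxiliary index. Interchanging the two summations and applying the $q$-binomial theorem $\sum_{n \geq 0} \frac{(a;q)_n}{(q;q)_n} z^n = \frac{(az;q)_\infty}{(z;q)_\infty}$ collapses the inner series in $n$ into a ratio of $q$-Pochhammer symbols. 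The remaining finite sum can then be recognised as a terminating ${}_2\phi_1$ evaluable by the $q$-Chu--Vandermonde identity, producing the explicit rational expression $q^k \lambda^{k+1}(-q^2\lambda^{-1};q^2)_k / \bigl[(q^{2k}-\lambda)(\lambda;q^2)_{k+1}\bigr]$; the extra factor of $(1-\lambda)$ in the denominator of $\lambda/(1-\lambda)$ is absorbed by one factor of $(\lambda;q^2)_{k+1}$, matching the pole structure on the right.

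The main obstacle is bookkeeping rather than conceptual. I expect three delicate points: (i) translating the formula of \cite{Morozov20} into the conventions used here for the rescaled $\tilde{H}_n(x;q^2)=H_n(x/\nu;q^2)$ and for the Gaussian $q$-distribution normalised as in Section~\ref{se:qHerm}, including the factors of $\nu=(1-q)^{-1/2}$ that arise from the change of variable in the Jackson integral; (ii) controlling the powers of $q$ that appear in the prefactor $q^{k(1-k)}[2k-1]_{q^2}!!$; and (iii) applying the correct specialisation of $q$-Chu--Vandermonde. As a consistency check, the right-hand side should reduce via \eqref{eq:qPoch_conv} in the limit $q \to 1$ to the Harer--Zagier generating function \eqref{eq:GUE_gf}, which pins down the overall scalar factors but not the full $q$-dependence; the latter comes directly from the $q$-hypergeometric identities above.
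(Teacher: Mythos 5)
First, note that the paper does not actually prove this statement: it is quoted directly from Morozov--Popolitov--Shakirov \cite{Morozov20}, so there is no internal proof to compare yours against. Judged on its own terms, your proposal opens soundly --- the decomposition $N\,M(k,N)=\sum_{n=0}^{N-1}a_n(k)$ with $a_n(k)=\int_{-\nu}^{\nu} x^{2k}\,\tilde H_n(x;q^2)^2\,\mathrm{d}g(x;q^2)$, and the interchange giving $\sum_{N\ge 0}\lambda^N N\,M(k,N)=\tfrac{\lambda}{1-\lambda}\sum_{n\ge 0}a_n(k)\lambda^n$, are both correct --- but the entire analytic content of the theorem is then outsourced to an unspecified ``explicit expression for the diagonal matrix elements $a_n(k)$'' attributed to \cite{Morozov20}. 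No such formula appears in the present paper, and, more seriously, the logical order in both this paper and in \cite{Morozov20} runs the other way: the explicit moment formula of Corollary \ref{cor:Morozov_mom} is \emph{obtained from} the generating function \eqref{eq:qHerm_mom_gf} by a partial-fraction expansion (the poles of the right-hand side at $\lambda=q^{2k}$ and at $\lambda=q^{-2a}$, $a=0,\dots,k$, are precisely the residues appearing in \eqref{eq:Morozov1}). Invoking that corollary, or anything derived from it, as the input to your computation would therefore be circular.

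To close the gap you would need an independent evaluation of $a_n(k)$, or of $\sum_{n}a_n(k)\lambda^n$ directly --- for instance via the three-term recurrence of the discrete $q$-Hermite polynomials together with a $q$-analogue of the Harer--Zagier argument, or via the character-expansion identities that \cite{Morozov20} actually employs. The remaining steps you sketch (a $q$-binomial summation in $n$ followed by a $q$-Chu--Vandermonde evaluation of a residual terminating ${}_2\phi_1$) are plausible in shape but are asserted rather than carried out; whether the inner sum really takes the form $\sum_{n}\frac{(a;q)_n}{(q;q)_n}z^n$, and whether the outer sum is Chu--Vandermonde-evaluable, depends entirely on the unstated formula for $a_n(k)$, so these steps cannot be verified as written. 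The $q\to 1$ consistency check against \eqref{eq:GUE_gf} is a reasonable sanity test, but, as you acknowledge, it constrains only the classical limit and not the full $q$-dependence.
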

In \cite{Morozov20}, this is then used to obtain an explicit formula for the pure moments:
\begin{cor}[Morozov-Popolitov-Shakirov (2020)] \label{cor:Morozov_mom}
The $2k^{th}$ moment of the $q$-Hermite ensemble one-point function is given by
\begin{equation} \label{eq:Morozov1}
M(k,N) N = q^{-2Nk} \text{res}_{q^{2k}} + \sum_{a=0}^k q^{2Na} \text{res}_{q^{-2a}}
\end{equation}
where these ``residues" are given by
\begin{align} \label{eq:res1}
& \frac{\textit{res}_{q^{2k}}}{q^{k(1-k)} [2k-1]_{q^2}!!} = q^k \frac{ \prod_{n=1}^k (q^{2n} + q^{2k})}{ \prod_{n=0}^k (1-q^{2n+2k})}
 \\
& \label{eq:res2} \frac{\textit{res}_{q^{-2a}}}{q^{k(1-k)} [2k-1]_{q^2}!!} =  \frac{ q^{k-2a} q^{-2ka} \prod_{n=1}^k (1+q^{2n+2a}) }
{q^{-2a} (q^{2k+2a}-1) \prod_{n=0}^{a-1} (1-q^{2n-2a}) \prod_{n=a+1}^k (1-q^{2n-2a})} .
\end{align}
\end{cor}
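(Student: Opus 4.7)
The approach is a direct rational-function partial fractions computation applied to the Morozov-Popolitov-Shakirov generating function \eqref{eq:qHerm_mom_gf}. The key observation is that the right-hand side of \eqref{eq:qHerm_mom_gf} is a proper rational function of $\lambda$ with only simple poles, so its Taylor coefficients at $\lambda=0$ are a linear combination of inverse powers of those poles weighted by residues. This is exactly the shape of \eqref{eq:Morozov1}.

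First, I rewrite the right-hand side of \eqref{eq:qHerm_mom_gf} as a genuine rational function by clearing the negative powers of $\lambda$: using
\[
(-q^{2}\lambda^{-1};q^{2})_{k}=\lambda^{-k}\prod_{j=0}^{k-1}(\lambda+q^{2j+2}),
\]
the numerator $q^{k}\lambda^{k+1}(-q^{2}\lambda^{-1};q^{2})_{k}$ collapses to the polynomial $q^{k}\lambda\prod_{j=0}^{k-1}(\lambda+q^{2j+2})$ of degree $k+1$, while the denominator $(q^{2k}-\lambda)(\lambda;q^{2})_{k+1}=(q^{2k}-\lambda)\prod_{j=0}^{k}(1-q^{2j}\lambda)$ is a polynomial of degree $k+2$. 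Assuming $0<q<1$, its $k+2$ roots at $\lambda=q^{2k}$ and $\lambda=q^{-2a}$ for $a=0,1,\dots,k$ are all distinct, so calling this rational function $F(\lambda)$ there is a partial fraction decomposition
\[
F(\lambda)=\frac{B}{q^{2k}-\lambda}+\sum_{a=0}^{k}\frac{A_{a}}{q^{-2a}-\lambda}.
\]

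Next, I extract the coefficient of $\lambda^{N}$ by expanding each simple-pole contribution as the geometric series $(c-\lambda)^{-1}=\sum_{N\geq 0}c^{-N-1}\lambda^{N}$, giving $[\lambda^{N}]F(\lambda)=Bq^{-2k(N+1)}+\sum_{a=0}^{k}A_{a}q^{2a(N+1)}$. Matching with the left-hand side of \eqref{eq:qHerm_mom_gf} and multiplying through by $q^{k(1-k)}[2k-1]_{q^{2}}!!$ reproduces \eqref{eq:Morozov1} once we identify
\[
\textit{res}_{q^{2k}}=q^{k(1-k)}[2k-1]_{q^{2}}!!\,B\,q^{-2k},\qquad \textit{res}_{q^{-2a}}=q^{k(1-k)}[2k-1]_{q^{2}}!!\,A_{a}\,q^{2a}.
\]

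Finally, I compute the residues explicitly. For $B$, only the factor $(q^{2k}-\lambda)$ vanishes at $\lambda=q^{2k}$, and direct substitution gives $B=q^{3k}\prod_{n=1}^{k}(q^{2k}+q^{2n})/\prod_{n=0}^{k}(1-q^{2n+2k})$, which rearranges to \eqref{eq:res1}. For $A_{a}$, I write the vanishing factor of $(\lambda;q^{2})_{k+1}$ as $(1-q^{2a}\lambda)=q^{2a}(q^{-2a}-\lambda)$, substitute $\lambda=q^{-2a}$ in the remaining factors, and pull out a factor of $q^{-2a}$ from each $q^{-2a}+q^{2n}$ (for $n=1,\dots,k$) and from $q^{2k}-q^{-2a}$; collecting the resulting $q$-powers matches \eqref{eq:res2}. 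The entire argument is mechanical; the only step requiring care is the bookkeeping of $q$-powers through these rescalings, and this is what I would treat as the main (modest) potential source of error.
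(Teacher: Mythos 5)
Your partial-fractions/residue extraction from the generating function \eqref{eq:qHerm_mom_gf} is correct (I checked the $q$-power bookkeeping in both residues, and it reproduces \eqref{eq:res1} and \eqref{eq:res2} exactly, for $k\geq 1$ where the $k+2$ poles are distinct), and it is precisely the derivation the notation $\textit{res}$ refers to. The paper itself supplies no proof, simply quoting the result from Morozov--Popolitov--Shakirov, so your argument coincides with the intended one rather than departing from it.
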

We now show that both the moments of the $q$-Hermite ensemble, and the generating function, which can be rewritten as moments of a certain randomisation of the ensemble, can be written in terms of basic hypergeometric series. Furthermore, the randomised moments are themselves basic hypergeometric orthogonal polynomials, which therefore satisfy three-term recurrences.

\begin{thm} \label{th:qHerm1}
The $2k^{th}$ moment of the $q$-Hermite ensemble normalised one-point function is given by
\begin{align}
\frac{M(k,N)}{[2k-1]_{q^2}!!} 
&= \frac{q^{2k-2Nk-k^2} (-q^2 ; q^2 )_k}
{ (1-q^{2k}) (q^2;q^2)_k }
{}_3 \phi_2 \left( \begin{matrix} -q^{2k+2} , q^{2k}, q^{-2k}  \\ -q^2, q^{2k+2} \end{matrix} ; q^2, \; q^2 \right)
\nonumber \\
&+ \frac{q^{2k-k^2} (-q^2 ; q^2 )_k}{ (q^{2k} - 1) (q^2;q^2)_k }
{}_3 \phi_2 \left( \begin{matrix} -q^{2k+2} , q^{2k}, q^{-2k}  \\ -q^2, q^{2k+2} \end{matrix} ; q^2, \; q^{2(N+1)} \right) .
\end{align}
\end{thm}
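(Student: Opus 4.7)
My approach is to start from the Morozov--Popolitov--Shakirov formula in Corollary~\ref{cor:Morozov_mom}, which expresses $M(k,N)N$ as a single residue at $q^{2k}$ plus a finite sum of residues at $q^{-2a}$ for $a=0,\ldots,k$. The two ${}_3\phi_2$ summands in the statement correspond respectively to these two pieces; I will identify each in turn and then combine with the normalisation $[2k-1]_{q^2}!!$.

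For the finite sum $\sum_{a=0}^k q^{2Na}\,\text{res}_{q^{-2a}}$, the plan is to recognise the summand as the generic term of a terminating ${}_3\phi_2$ in the index~$a$. Starting from~\eqref{eq:res2}, I would pull out an $a$-independent prefactor and apply the $q$-Pochhammer identities
\begin{equation*}
(q^{-2k};q^2)_a = (-1)^a q^{-2ka+a(a-1)} \frac{(q^2;q^2)_k}{(q^2;q^2)_{k-a}},
\qquad
\frac{(-q^{2a+2};q^2)_k}{(-q^2;q^2)_k} = \frac{(-q^{2k+2};q^2)_a}{(-q^2;q^2)_a},
\end{equation*}
together with $\dfrac{1-q^{2k}}{1-q^{2k+2a}} = \dfrac{(q^{2k};q^2)_a}{(q^{2k+2};q^2)_a}$, to recast the $a$-dependent piece as a ratio of $q$-shifted factorials indexed by~$a$. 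The external factor $q^{2Na}$, combined with an intrinsic $q^{2a}$ produced by the simplification, yields the argument $q^{2(N+1)}$. The parameter $q^{-2k}$ is introduced through the reflection identity, which simultaneously forces termination at $a=k$, and the remaining Pochhammers assemble into the numerator parameters $(-q^{2k+2};q^2)_a$, $(q^{2k};q^2)_a$ and denominator parameters $(-q^2;q^2)_a$, $(q^{2k+2};q^2)_a$ of the second ${}_3\phi_2$ in the statement.

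For the single residue $q^{-2Nk}\,\text{res}_{q^{2k}}$, I would first simplify~\eqref{eq:res1} using the identities $\prod_{n=1}^k(q^{2n}+q^{2k}) = 2 q^{k(k+1)} (-q^2;q^2)_{k-1}$ and $\prod_{n=0}^k(1-q^{2n+2k}) = (q^{2k};q^2)_{k+1}$ to obtain a closed-form rational function in $q$. To recognise this as the claimed prefactor times ${}_3\phi_2(-q^{2k+2},q^{2k},q^{-2k};-q^2,q^{2k+2};q^2,q^2)$, I would verify that this ${}_3\phi_2$ is Saalschütz-balanced: with non-terminating numerator parameters $a=-q^{2k+2}$, $b=q^{2k}$ and denominator parameters $c=-q^2$, $d=q^{2k+2}$, the balance condition $cd = ab(q^2)^{1-k}$ reduces to $-q^{2k+4}=-q^{2k+4}$, which indeed holds. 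The $q$-Pfaff--Saalschütz summation then evaluates the ${}_3\phi_2$ in closed form, and standard manipulations of $q$-shifted factorials show its product with the prefactor equals the simplified residue.

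The main obstacle is the bookkeeping: tracking the powers of~$q$ and the signs through the cascade of Pochhammer identities, and matching the exponents carefully on both sides. Structurally the proof is straightforward; once the two contributions have been identified as the correct basic hypergeometric series, summing them and normalising by $[2k-1]_{q^2}!!$ yields the stated formula.
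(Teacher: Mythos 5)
Your proposal is correct, and for the main part of the argument---recasting the finite sum $\sum_{a=0}^k q^{2Na}\,\mathrm{res}_{q^{-2a}}$ as the terminating ${}_3\phi_2$ with argument $q^{2(N+1)}$ via exactly the reflection identity $(q^2;q^2)_{k-a}=(-1)^a q^{a(a-1)}q^{-2ka}(q^2;q^2)_k/(q^{-2k};q^2)_a$ and its companions---it coincides with the paper's proof. Where you diverge is in the treatment of the isolated term $q^{-2Nk}\,\mathrm{res}_{q^{2k}}$. You propose to simplify \eqref{eq:res1} to a closed product, observe that the ${}_3\phi_2$ at argument $q^2$ is balanced (your check $cd=ab(q^2)^{1-k}$ is right), evaluate it by $q$-Pfaff--Saalsch\"utz, and match the two closed forms. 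The paper instead uses a boundary-condition trick: writing $S_N$ for the normalised finite sum, it sets $N=0$ in \eqref{eq:Morozov1}, where the left-hand side $N\,M(k,N)$ vanishes, to conclude $\mathrm{res}_{q^{2k}}=-S_0$; since $S_0$ is by construction the same ${}_3\phi_2$ evaluated at $q^{2(0+1)}=q^2$, the first term of the theorem drops out with no summation theorem and no product manipulation at all. The paper explicitly remarks that the identity $S_0=-\mathrm{res}_{q^{2k}}$ can alternatively be seen via $q$-Saalsch\"utz, which is precisely your route; so your argument is a legitimate, self-contained verification, at the cost of the extra bookkeeping you acknowledge (matching $2q^{k(k+1)}(-q^2;q^2)_{k-1}/(q^{2k};q^2)_{k+1}$ against the Saalsch\"utz evaluation $(q^{-2k};q^2)_k(-q^{2-2k};q^2)_k/((-q^2;q^2)_k(q^{-4k};q^2)_k)$ times the prefactor), whereas the $N=0$ observation gives the same conclusion for free.
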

\begin{proof}
This Theorem is a reinterpretation of Corollary \ref{cor:Morozov_mom}.  From \eqref{eq:Morozov1}, we have that
\begin{equation}
N M(k,N) = q^{-2Nk} \text{res}_{q^{2k}} + \sum_{a=0}^k q^{2Na} \text{res}_{q^{-2a}}.
\end{equation} 
We first consider the sum term alone, which given in $q$-Pochhammer symbols is
\begin{equation}
S_N:=q^{k(1-k)} [2k-1]_{q^2}!! \sum_{a=0}^k q^{2Na} \text{res}_{q^{-2a}} 
= \sum_{a=0}^k \frac{q^{2Na+k-2ka} (-q^{2a+2} ; q^2 )_k }{ (q^{2k+2a}-1) (q^{-2} ; q^{-2} )_a (q^2 ; q^2)_{k-a}}.
\end{equation}
Putting the $q$-Pochhammer symbols in terms of $q^2$ (rather than $q^{-2}$) and noting that
\begin{equation} \label{eq:q_rearr}
(q^2; q^2)_{k-a} = (-1)^a q^{a(a-1)} q^{-2ka} \frac{(q^2;q^2)_k}{ (q^{-2k} ; q^2)_a },
\end{equation}
we can rearrange this sum so that it is in the form of a basic hypergeometric function:
\begin{align}
S_N &= \sum_{a=0}^k \left( q^{2Na+k-2ka} \frac{ (-q^2 ; q^2 )_k (-q^{2k+2} ; q^2 )_a }{ (-q^2 ; q^2 )_a } \right) \div 
\nonumber \\ 
& \left( \left( (q^{2k} - 1) \frac{ (q^{2k+2} ; q^2)_a}{(q^{2k} ; q^2)_a}  \right)
\left( (-1)^a q^{-a(a+1)} (q^2 ; q^2)_a \right)
\left( (-1)^a q^{a(a-1)} q^{-2ka} \frac{(q^2;q^2)_k}{ (q^{-2k} ; q^2)_a } \right) \right) 
\nonumber \\
&= \frac{q^k (-q^2 ; q^2 )_k}{ (q^{2k} - 1) (q^2;q^2)_k }
\sum_{a=0}^k \frac{(-q^{2k+2} ; q^2 )_a  (q^{2k} ; q^2 )_a (q^{-2k} ; q^2 )_a }{(-q^2 ; q^2 )_a (q^{2k+2} ; q^2 )_a (q^2 ; q^2 )_a } q^{2Na + 2a}
\nonumber \\
&=  \frac{q^k (-q^2 ; q^2 )_k}{ (q^{2k} - 1) (q^2;q^2)_k }
{}_3 \phi_2 \left( \begin{matrix} -q^{2k+2} , q^{2k}, q^{-2k}  \\ -q^2, q^{2k+2} \end{matrix} ; q^2, \; q^{2(N+1)} \right) .
\end{align}
For the term that arises from the $\textit{res}_{q^{2k}}$, we note that it is clear that when $N=0$, the formula \eqref{eq:Morozov1} becomes
\begin{equation}
0 = q^0 \textit{res}_{q^{2k}} + S_0,
\end{equation}
and therefore $\textit{res}_{q^{2k}} = - S_0$, so 
\begin{equation}
N M(k,N) = -q^{-2Nk} S_0 + S_N,
\end{equation}
as required. 
\end{proof}
We remark that, in the above proof, the fact that
\begin{equation}
S_0 =-  \textit{res}_{q^{2k}}
\end{equation}
as defined in \eqref{eq:res1} can also be observed as a consequence from $q$-Saalschutz summation formula.

The generating function \eqref{eq:qHerm_mom_gf} in $N$ can clearly also be thought of as a randomisation of the number of particles $N$ in the ensemble. It is equivalent to choosing $(N-1)$ according to a Negative Binomial distribution with parameters $(2,\lambda)$ (that is, the same randomisation as in Remarks \ref{re:randHerm} and \ref{re:randLag}), after multiplying by $(1-\lambda)^2$:
\begin{equation}
(1-\lambda)^2 \sum_{N \geq 0} \lambda^N \frac{ N M(k,N) } {q^{k(1-k)} [2k-1]_{q^2} !!}
=\frac{ 1} {q^{k(1-k)} [2k-1]_{q^2} !!} \sum_{N \geq 1}  N M(k,N) \lambda^{N-1} (1-\lambda)^2 
\end{equation}
\begin{thm}
Let the $2k^{th}$ moment of the $q$-Hermite normalised one-point function, when $N-1$ is chosen randomly according to a Negative Binomial distribution with parameters $(2,\lambda)$ for some $0 < \lambda < 1$, be denoted
\begin{equation}
M_k(\lambda; q) := \sum_{N \geq 1} M(k,N) N \lambda^{N-1} (1-\lambda)^2.
\end{equation} 
Then this randomised moment is given by
\begin{equation}
M_k(\lambda; q) =
[2k-1]_{q^2}!!  \frac{ q^{-k} (-q^2 ; q^2 )_k (1-\lambda)}
{(q^2;q^2)_k (1-\lambda q^{-2k})}
{}_3 \phi_2 \left( \begin{matrix} -q^{2k+2}, q^{-2k}, \lambda  \\ -q^2, \lambda q^2 \end{matrix} ; q^2, \; q^2 \right).
\end{equation}
\end{thm}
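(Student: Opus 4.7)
The plan is to derive a closed-form expression for $M_k(\lambda; q)$ directly from the Morozov--Popolitov--Shakirov generating function \eqref{eq:qHerm_mom_gf} and then recognise the resulting $q$-Pochhammer ratio as the value of a Saalschützian ${}_3\phi_2$ via the $q$-Saalschütz summation formula.

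First I would observe that $\sum_{N \geq 1} N \lambda^{N-1} = (1-\lambda)^{-2}$, so
\[
M_k(\lambda; q) = \frac{(1-\lambda)^2}{\lambda} \sum_{N \geq 0} N\,M(k,N)\,\lambda^N.
\]
Substituting \eqref{eq:qHerm_mom_gf} and performing the elementary factorisations $(\lambda; q^2)_{k+1} = (1-\lambda)(\lambda q^2; q^2)_k$ and $q^{2k} - \lambda = q^{2k}(1 - \lambda q^{-2k})$ immediately produces the $(1-\lambda)/(1-\lambda q^{-2k})$ factor appearing in the statement, and collapses the remaining rational expression into an explicit closed form: a power of $q$ times a power of $\lambda$ times the $q$-Pochhammer ratio $(-q^2\lambda^{-1}; q^2)_k / (\lambda q^2; q^2)_k$.

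The key second step is to recognise this ratio as a balanced terminating ${}_3\phi_2$ by running $q$-Saalschütz,
\[
{}_3\phi_2\!\left(\begin{matrix} q^{-n},\, a,\, b\\ c,\, ab\, q^{1-n}/c\end{matrix}; q, q\right) = \frac{(c/a, c/b; q)_n}{(c, c/(ab); q)_n},
\]
in reverse. Taking $q \mapsto q^2$, $n = k$, $a = -q^{2k+2}$, $b = \lambda$, $c = -q^2$, the balance condition forces the second lower parameter to be $\lambda q^2$, matching the theorem exactly. Using the standard identities $(q^{-2k}; q^2)_k = (-1)^k q^{-k(k+1)}(q^2; q^2)_k$ and $(q^{-2k}/\lambda; q^2)_k = (-1)^k q^{-k(k+1)}\lambda^{-k}(\lambda q^2; q^2)_k$, the right-hand side of $q$-Saalschütz collapses to $\lambda^k (q^2; q^2)_k (-q^2\lambda^{-1}; q^2)_k / \bigl((-q^2; q^2)_k (\lambda q^2; q^2)_k\bigr)$, and substituting this identification back into the closed form yields precisely the product of the prefactor and the ${}_3\phi_2$ in the theorem.

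I expect the main obstacle to be the bookkeeping: verifying the Saalschütz balance condition and tracking the powers of $q$ arising from the reduction of $q^{-2k}$-type Pochhammers into ordinary Pochhammers in $q^2$. One could in principle start instead from the explicit series of Theorem~\ref{th:qHerm1} and sum term-by-term using $\sum_{N \geq 1} N x^{N-1} = (1-x)^{-2}$, but the resulting squared Pochhammer denominators $(1 - \lambda q^{2a})^2$ do not immediately assemble into a ${}_3\phi_2$, so the generating-function route via \eqref{eq:qHerm_mom_gf} is considerably cleaner.
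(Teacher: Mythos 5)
Your route is genuinely different from the paper's. The paper does \emph{not} use the closed-form generating function \eqref{eq:qHerm_mom_gf} at this stage: it starts from the explicit two-term representation of $NM(k,N)$ in Theorem~\ref{th:qHerm1}, expands both ${}_3\phi_2$'s as finite sums over $a$, interchanges the order of summation, sums the geometric series in $N$ to get $\frac{1}{1-\lambda q^{-2k}}-\frac{1}{1-\lambda q^{2a}}$, and then recombines that difference into the ratio $\frac{(q^{2k+2};q^2)_a(\lambda;q^2)_a}{(q^{2k};q^2)_a(\lambda q^2;q^2)_a}$ so that the $a$-sum reassembles into the new ${}_3\phi_2$. (Incidentally, your worry about ``squared Pochhammer denominators'' in the term-by-term route is unfounded: the factor $N$ is already inside $NM(k,N)$, so the $N$-sum is a plain geometric series.) Your route is cleaner and buys something the paper does not state: the ${}_3\phi_2$ in the theorem is Saalsch\"utzian. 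I checked your balance condition ($ab q^{2(1-k)}/c=\lambda q^2$ with $a=-q^{2k+2}$, $b=\lambda$, $c=-q^2$) and your Pochhammer reductions; the evaluation ${}_3\phi_2=\lambda^k(q^2;q^2)_k(-q^2\lambda^{-1};q^2)_k/\bigl((-q^2;q^2)_k(\lambda q^2;q^2)_k\bigr)$ is correct (e.g.\ at $k=1$ both sides equal $(1-q^2)(q^2+\lambda)/\bigl((1+q^2)(1-\lambda q^2)\bigr)$).

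There is, however, one concrete issue you will hit when you do the bookkeeping you deferred. Carrying out your first step exactly as described gives
\begin{equation*}
M_k(\lambda;q)=[2k-1]_{q^2}!!\,\frac{q^{-k^2}\,\lambda^k\,(1-\lambda)\,(-q^2\lambda^{-1};q^2)_k}{(1-\lambda q^{-2k})\,(\lambda q^2;q^2)_k},
\end{equation*}
whereas the stated theorem, after your Saalsch\"utz evaluation, reads the same expression with $q^{-k}$ in place of $q^{-k^2}$. The two differ by exactly $q^{k(1-k)}$ --- the normalisation factor on the left of \eqref{eq:qHerm_mom_gf} --- and agree only at $k=1$. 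So your derivation does not land on the stated prefactor for $k\geq 2$; either the normalisation in \eqref{eq:qHerm_mom_gf} as quoted or the power of $q$ in the theorem carries a stray $q^{k(1-k)}$, and your proof must resolve that discrepancy explicitly rather than absorb it. (The paper has internal tensions of the same kind: the last line of its own proof produces a prefactor $\lambda q^{-k}$ rather than the stated $(1-\lambda)q^{-k}$, and the power of $q$ in Theorem~\ref{th:qHerm1} does not match its proof for $k\geq 2$ either.) Apart from pinning down this single power of $q$, your argument is complete and, in my view, the more illuminating one.
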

\begin{proof}
First let us denote the prefactor 
\begin{equation}
A := [2k-1]_{q^2} !! \frac{q^k (-q^2 ; q^2 )_k (1- \lambda)}
{ (1-q^{2k}) (q^2;q^2)_k }
\end{equation}
hence by Theorem \ref{th:qHerm1} the randomised moment becomes
\begin{align}
& M_k(\lambda; q) = \sum_{N \geq 1} M(k,N) N \lambda^{N-1} (1-\lambda)^2
\nonumber \\
&= \sum_{N \geq 0} A \lambda^N \left\{ q^{-2Nk} 
{}_3 \phi_2 \left( \begin{matrix} -q^{2k+2} , q^{2k}, q^{-2m}  \\ -q^2, q^{2k+2} \end{matrix} ; q^2, \; q^2 \right)
- {}_3 \phi_2 \left( \begin{matrix} -q^{2k+2} , q^{2k}, q^{-2k}  \\ -q^2, q^{2k+2} \end{matrix} ; q^2, \; q^{2(N+1)} \right) \right\} 
\nonumber \\
&= A \sum_{N \geq 0} \lambda^N \sum_{a=0}^k
\frac{(-q^{2k+2} ; q^2 )_a  (q^{2k} ; q^2 )_a (q^{-2k} ; q^2 )_a }{(-q^2 ; q^2 )_a (q^{2k+2} ; q^2 )_a (q^2 ; q^2 )_a } q^{2a}
\Big\{ q^{-2Nk } - q^{2Na} \Big\} 
\nonumber \\
&= A \sum_{a=0}^k 
\frac{(-q^{2k+2} ; q^2 )_a  (q^{2k} ; q^2 )_a (q^{-2k} ; q^2 )_a }{(-q^2 ; q^2 )_a (q^{2k+2} ; q^2 )_a (q^2 ; q^2 )_a } q^{2a} 
\Big\{ \sum_{N \geq 0} \lambda^N q^{-2Nk} - \sum_{N \geq 0} \lambda^N q^{Na} \Big\} 
\nonumber \\
&= A \sum_{a=0}^k
\frac{(-q^{2k+2} ; q^2 )_a  (q^{2k} ; q^2 )_a (q^{-2k} ; q^2 )_a }{(-q^2 ; q^2 )_a (q^{2k+2} ; q^2 )_a (q^2 ; q^2 )_a } q^{2a} 
\Big\{ \frac{1}{(1-\lambda q^{-2k})} - \frac{1}{(1-\lambda q^{2a})} \Big\}.
\end{align}
Then we note that this term in the curly brackets becomes
\begin{equation}
\Big\{ \frac{1}{(1-\lambda q^{-2k})} - \frac{1}{(1-\lambda q^{2a})} \Big\} = 
\frac{ \lambda (q^{-2k} - q^{2a})} {(1-\lambda q^{-2k}) (1-\lambda q^{2a})}
=\lambda q^{-2k} 
\frac{ (1 - q^{2a+2k})} {(1-\lambda q^{-2k}) (1-\lambda q^{2a})}
\end{equation}
and we can rewrite this in terms of q-Pochhammer symbols as
\begin{equation}
\Big\{ \frac{1}{(1-\lambda q^{-2k})} - \frac{1}{(1-\lambda q^{2a})} \Big\} = 
\lambda q^{-2k}
\frac{ (q^{2k+2} ; q^2)_a (\lambda ; q^2)_a }
{ (q^{2k} ; q^2)_a (\lambda q^2 ; q^2)_a }
\frac{ (1-q^{2k}) } {(1-\lambda q^{-2k}) (1-\lambda)}.
\end{equation}
Therefore the randomised moment becomes
\begin{align}
M_k (\lambda; q)
&= A \sum_{a=0}^k
\frac{(-q^{2k+2} ; q^2 )_a  (q^{2k} ; q^2 )_a (q^{-2k} ; q^2 )_a }{(-q^2 ; q^2 )_a (q^{2k+2} ; q^2 )_a (q^2 ; q^2 )_a } q^{2a} 
\nonumber \\
& \times \Big\{ \lambda q^{-2k}
\frac{ (q^{2k+2} ; q^2)_a (\lambda ; q^2)_a }
{ (q^{2k} ; q^2)_a (\lambda q^2 ; q^2)_a }
\frac{ (1-q^{2k}) } {(1-\lambda q^{-2k})(1-\lambda)} \Big\} 
\nonumber \\
&= A \frac{ \lambda q^{-2k} (1-q^{2k}) } {(1-\lambda q^{-2k})(1-\lambda)}
\sum_{a=0}^k 
\frac{(-q^{2k+2} ; q^2 )_a (q^{-2k} ; q^2 )_a (\lambda ; q^2)_a}
{(-q^2 ; q^2 )_a (q^2 ; q^2 )_a (\lambda q^2 ; q^2)_a} q^{2a} 
\nonumber \\
&= A \frac{ \lambda } {(1-\lambda)} 
\frac{q^{-2k} (1-q^{2k})}{(1-\lambda q^{-2k})}
{}_3 \phi_2 \left( \begin{matrix} -q^{2k+2}, q^{-2k}, \lambda  \\ -q^2, \lambda q^2 \end{matrix} ; q^2, \; q^2 \right)
\end{align} 
which is exactly the basic hypergeometric series required.
\end{proof}
Now we note that the $q$-Hahn polynomials are defined in \cite[14.6]{Koekoek10} as the basic hypergeometric function
\begin{equation} \label{eq:qHahn}
Q_m(q^{-x}; \alpha, \beta, K | q) = {}_3 \phi_2 \left( \begin{matrix} q^{-m}, \alpha \beta q^{m+1}, q^{-x}  \\ \alpha q, q^{-K} \end{matrix} ; q, \; q \right)
\end{equation}
for $m=0, 1, \dots , K$ where $K$ is some fixed positive integer. Therefore moving to $q^2$, taking parameters $\alpha = -1$ and $\beta=1$, and setting $\lambda = q^{-2K-2}$ as the argument, we have
\begin{equation}
Q_k( q^{-2K-2}; -1, 1, K | q^2) = {}_3 \phi_2 \left( \begin{matrix} -q^{2k+2}, q^{-2k}, q^{-2K-2}  \\ -q^2, q^{-2K} \end{matrix} ; q^2, \; q^2 \right)
\end{equation}
Therefore we have the following result immediately.
\begin{cor}
The $2k^{th}$ moment of the $q$-Hermite normalised one-point function, when $N-1$ is chosen randomly according to a Negative Binomial distribution with parameters $(2,\lambda)$ for some $0 < \lambda < 1$, is given by
\begin{equation}
M_k(\lambda;q) =
\frac{ q^{-k} (-q^2 ; q^2 )_k}
{(q^2;q^2)_k (1- q^{-2k-2K-2})}
Q_k( q^{-2K-2}; -1, 1, K | q^2).
\end{equation}
\end{cor}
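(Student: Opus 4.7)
The proof is a one-step specialization of the preceding theorem. The plan is to set $\lambda = q^{-2K-2}$ in the formula for $M_k(\lambda; q)$ and then recognize the resulting ${}_3\phi_2$ as the $q$-Hahn polynomial via the definition \eqref{eq:qHahn}.

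Concretely, with $\lambda = q^{-2K-2}$ the lower parameter $\lambda q^2$ of the ${}_3\phi_2$ in the preceding theorem collapses to $q^{-2K}$, yielding
$${}_3\phi_2\left(\begin{matrix} -q^{2k+2},\, q^{-2k},\, q^{-2K-2} \\ -q^2,\, q^{-2K} \end{matrix}; q^2,\, q^2\right).$$
To match this against \eqref{eq:qHahn}, I would replace $q$ by $q^2$ there and take $m=k$, $\alpha=-1$, $\beta=1$, and $q^{-x} = q^{-2K-2}$. Under that replacement the numerator entries $q^{-m}$ and $\alpha\beta q^{m+1}$ become $q^{-2k}$ and $-q^{2k+2}$, while the denominator entries $\alpha q$ and $q^{-K}$ become $-q^2$ and $q^{-2K}$, which is precisely what appears above. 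Hence the series equals $Q_k(q^{-2K-2};-1,1,K\,|\,q^2)$.

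The remaining work is bookkeeping on the scalar prefactor: upon substituting $\lambda = q^{-2K-2}$ one gets $1-\lambda q^{-2k} = 1 - q^{-2k-2K-2}$, and the remaining constant factors collect into the coefficient stated in the corollary. Since the theorem has already done the genuine computation, there is no real obstacle here; the only thing to verify carefully is the identification of the $q$-Hahn parameters, which is straightforward pattern-matching.
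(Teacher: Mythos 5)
Your proposal matches the paper's own derivation exactly: the paper likewise obtains the corollary by setting $\lambda = q^{-2K-2}$ in the preceding theorem and pattern-matching the resulting ${}_3\phi_2$ against the $q$-Hahn definition \eqref{eq:qHahn} in base $q^2$ with $m=k$, $\alpha=-1$, $\beta=1$, calling the result ``immediate.'' Your parameter identification is correct; the only caveat is that the prefactor printed in the corollary appears to drop the factors $[2k-1]_{q^2}!!$ and $(1-\lambda)$ that are present in the theorem, which is a discrepancy in the statement as printed rather than a flaw in your argument.
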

As an immediate result of the properties of the $q$-Hahn polynomials, the moments therefore satisfy a three-term recurrence.
\begin{cor}
Denoting the $2n^{th}$ randomised moment by $M_n(\lambda; q)$, the following three-term recurrence is satisfied.
\begin{equation}
A_n M_{n+1}(\lambda; q) + B_n M_n (\lambda; q) - C_n M_{n-1}(\lambda; q) = 0
\end{equation}
where 
\begin{align}
& A_n = \frac{(1-q^{2n-2K})(1 + q^{2n+2})^2}
{(1 + q^{4n+2})(1 + q^{4n+4})}  ,
\nonumber \\
& B_n =  \left( 1-q^{-2K-2} - \frac{(1-q^{2n-2K})(1 + q^{2n+2})^2}
{(1 + q^{4n+2})(1 + q^{4n+4})} + \frac{ q^{2n-2K} (1-q^{2n})^2 (1 + q^{2n+2K+2})}
{(1 + q^{4n})(1 + q^{4n+2})} \right)
\nonumber \\
& \times
\frac{q^{-1} (1+q^{2n+2})(1-q^{-2n-2K-2})}
{(1-q^{2n+2})(1-q^{-2n-2K-4})} ,
\nonumber \\
& C_n = \frac{ q^{2n-2K} (1-q^{2n})^2 (1 + q^{2n+2K+2})}
{(1 + q^{4n})(1 + q^{4n+2})}
\frac{q^{-2} (1+q^{2n+2})(1+q^{2n})(1-q^{-2n-2K})}
{(1-q^{2n+2})(1-q^{2n})(1-q^{-2n-2K-4})} 
\end{align}
\end{cor}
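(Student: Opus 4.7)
The plan is to derive this three-term recurrence directly from the standard three-term recurrence of the $q$-Hahn polynomials, transported through the preceding corollary's identification
\begin{equation*}
M_n(\lambda;q) = P(n) \cdot Q_n(q^{-2K-2}; -1, 1, K \mid q^2), \qquad P(n) := \frac{q^{-n}(-q^2;q^2)_n}{(q^2;q^2)_n\,(1 - q^{-2n-2K-2})},
\end{equation*}
valid under the identification $\lambda = q^{-2K-2}$.

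The first step is to recall from \cite[14.6]{Koekoek10} the $q$-Hahn recurrence
\begin{equation*}
-(1-q^{-x}) Q_n(q^{-x}) = \mathcal{A}_n Q_{n+1}(q^{-x}) - (\mathcal{A}_n + \mathcal{C}_n) Q_n(q^{-x}) + \mathcal{C}_n Q_{n-1}(q^{-x}),
\end{equation*}
together with its explicit rational coefficients $\mathcal{A}_n, \mathcal{C}_n$ in terms of $\alpha, \beta, N, q$. Specialising to $\alpha = -1$, $\beta = 1$, $N = K$, replacing $q$ by $q^2$, and evaluating at $x = K+1$ so that $q^{-x} = \lambda$, a direct substitution should give
\begin{equation*}
\mathcal{A}_n = \frac{(1-q^{2n-2K})(1+q^{2n+2})^2}{(1+q^{4n+2})(1+q^{4n+4})}, \qquad
\mathcal{C}_n = \frac{q^{2n-2K}(1-q^{2n})^2(1+q^{2n+2K+2})}{(1+q^{4n})(1+q^{4n+2})},
\end{equation*}
which are precisely the stated $A_n$ and the two rational expressions appearing inside the parentheses defining $B_n$ (and appearing as a factor in $C_n$).

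The second step is to convert this to a recurrence for $M_n$ itself by writing $Q_m = M_m / P(m)$ and clearing denominators, for instance by multiplying through by $P(n+1)$. The coefficient of $M_{n+1}$ is then just $\mathcal{A}_n$, while the coefficients of $M_n$ and $M_{n-1}$ pick up prefactor ratios $P(n+1)/P(n)$ and $P(n+1)/P(n-1)$ respectively. Expanding these using $(-q^2;q^2)_{n+1} = (1+q^{2n+2})(-q^2;q^2)_n$ and $(q^2;q^2)_{n+1} = (1-q^{2n+2})(q^2;q^2)_n$, together with the obvious telescoping of the $(1-q^{-2m-2K-2})$ factors, should yield
\begin{equation*}
\frac{P(n+1)}{P(n)} = \frac{q^{-1}(1+q^{2n+2})(1 - q^{-2n-2K-2})}{(1-q^{2n+2})(1-q^{-2n-2K-4})},
\end{equation*}
and an analogous expression for $P(n+1)/P(n-1)$. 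Matching these against the stated ``extra'' rational factors in $B_n$ and $C_n$ completes the argument.

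The main obstacle is not conceptual but purely computational: the $q$-Pochhammer algebra in the two ratios $P(n+1)/P(n)$ and $P(n+1)/P(n-1)$ has to be executed carefully, and one must track the signs inherited from the factors $(1-q^{-2m-2K-2})$ in $P(m)$ so that the overall sign of the $M_{n-1}$ term (made explicit by the minus sign in the stated recurrence) is correct and the final rational functions agree verbatim with the explicit formulas in the statement.
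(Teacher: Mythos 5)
Your proposal follows exactly the paper's route: the paper's proof simply invokes the $q$-Hahn three-term recurrence with $\alpha=-1$, $\beta=1$, $N=K$, $q\mapsto q^2$ evaluated at $q^{-x}=\lambda=q^{-2K-2}$, and the prefactor ratios $P(n+1)/P(n)$, $P(n+1)/P(n-1)$ that you compute explicitly are precisely the ``extra'' rational factors in $B_n$ and $C_n$ (your specialised $\mathcal{A}_n$, $\mathcal{C}_n$ and both ratios check out against the stated coefficients). The only point needing care, which you already flag, is the sign bookkeeping between the recurrence as normalised in the paper and the form $A_nM_{n+1}+B_nM_n-C_nM_{n-1}=0$ in the statement.
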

\begin{proof}
This comes as an immediate consequence of the following three-term recurrence satisfied by the $q$-Hahn polynomials:
\begin{equation} \label{eq:qHahn_rec}
a_n Q_{n+1}(q^{-x}) - \left( a_n + c_n + (1-q^{-x}) \right) Q_n(q^{-x}) + c_n Q_{n-1}(q^{-x})  = 0
\end{equation}
where we denote $Q_n(q^{-x}) =  Q_n(q^{-x}; \alpha, \beta, K; q)$, and
\begin{align}
& a_n = \frac{ (1-q^{n-K})(1- \alpha q^{n+1})(1- \alpha \beta q^{n+1})}
{ (1-\alpha \beta q^{2n+1})(1- \alpha \beta q^{2n+2}) },
\nonumber \\
& c_n = -\frac{ \alpha q^{n-K} (1-q^n)(1- \alpha \beta q^{n+K+1})(1- \beta q^n)}
{ (1- \alpha \beta q^{2n})(1 - \alpha \beta q^{2n+1})}.
\end{align}
\end{proof}

\section{Discrete $q$-Laguerre ensemble} \label{se:qLag}
We now consider the normalised $q$-Laguerre polynomials $L_n(x; \alpha,q)$ as defined in \cite[14.21]{Koekoek10}, which satisfy both a continuous and a discrete orthogonality relation.  The moments of the ensemble with respect to the continuous measure are explored in \cite{Tierz21}. Here we consider the discrete relation, where the polynomials are orthogonal with respect to the measure $\gamma^{\alpha}$ on the interval $[0,\infty)$ defined, for appropriate test functions $f(x)$, by
\begin{equation}
\int_{[0,\infty)} f(x) \mathrm{d} \gamma^{\alpha}(x) =
\frac{2 (q^{\alpha+1};q)_{\infty} (-q;q)_{\infty}^2}
{(1-q)(-q^{\alpha+1};q)_{\infty} (-q^{-\alpha};q)_{\infty} (q;q)_{\infty}^2}
\int_0^{\infty} \frac{ f(x) x^{\alpha}} {(-x; q)_{\infty}} \mathrm{d}_q x,
\end{equation}
where the integral on the right hand side is a Jackson integral.  This is an atomic measure, supported on $\{ q^{\pm j}; \; j \in \mathbb{Z}_+ \}$, with
\begin{equation} \label{eq:qLaguerre_meas}
\gamma^{\alpha}( \{ x \} ) =
\frac{2 (q^{\alpha+1};q)_{\infty} (-q;q)_{\infty}}
{(-q^{\alpha+1};q)_{\infty} (-q^{-\alpha};q)_{\infty} (q;q)_{\infty}^2}
 \sum_{n = - \infty}^{\infty}  q^{n(\alpha+1)} (-q;q)_n \mathbf{1}_{q^n} (x)
\end{equation}
The $n^{th}$ normalised $q$-Laguerre polynomial $L_n(x; \alpha, q)$ can be written as~\cite[14.21]{Koekoek10}
\begin{align} \label{eq:qLaguerre}
L_n(x;q) &= 
\sqrt{ \frac{ q^n}{(q;q)_n (q^{\alpha+1};q)_n}}
{}_2 \phi_1 \left( \begin{matrix} q^{-n},  -x  \\ 0 \end{matrix} ;q,  q^{n+\alpha+1} \right) 
\nonumber \\
&= \sqrt{ \frac{(q^{\alpha+1};q)_n q^n}{(q;q)_n}}
{}_1 \phi_1 \left( \begin{matrix} q^{-n} \\ q^{\alpha+1} \end{matrix} ;q,  -q^{n+\alpha+1}x \right).
\end{align}

The corresponding orthogonal polynomial ensemble \eqref{eq:dQ} is the probability measure on $[0,\infty)^N$
\begin{equation}
\mathrm{d} Q(x) = \frac{1}{Z_N} \Delta_N(\mathbf{x})^2 \prod_{j=1}^N \mathrm{d} \gamma^{\alpha} (x_j),
\end{equation}
and has a normalised one-point function given by
\begin{equation}
\mathrm{d} \rho_N^{\alpha} (x) = \frac{1}{N} \sum_{n=0}^{N-1} L_n(x; \alpha, q)^2 \mathrm{d} \gamma^{\alpha} (x).
\end{equation} 
We denote the $k^{th}$ moment of the normalised one-point function of the $q$-Laguerre ensemble
\begin{equation}
M_{\alpha}(k,N) := \int_0^{\infty} x^k \mathrm{d} \rho_N^{\alpha} (x),
\end{equation}
and we can calculate these moments using a similar method as in \cite{Morozov20} following the proof of Theorem \ref{th:HSS}. Unlike in the $q$-Hermite case, this is found to be a single basic hypergeometric series.  Our starting point is the following $q$-analogue of Theorem \ref{th:Hanlon}, due to Morozov, Popolitov and Shakirov \cite{MPSqt}.
\begin{lem}[Morozov-Popolitov-Shakirov (2018)] \label{le:qLag_schur}
The expectation of a Schur polynomial under the $q$-Laguerre ensemble is given by
\begin{equation}
\mathbb{E}[s_{\lambda} (X_1, \dots, X_N)] = 
\frac{ q^{k(2-2N-\alpha)}}{ (1-q^2)^k}
\frac{ s_{\lambda}(1,q^2, \dots, q^{2N-2}) s_{\lambda}(1,q^2, \dots, q^{2N+2 \alpha-2})}
{ s_{\lambda}(1,q^2, q^4, \dots)}.
\end{equation}
\end{lem}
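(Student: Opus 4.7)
The plan is to reduce the expectation to a determinantal computation involving one-dimensional $q$-moments and then match the result against the Weyl (bialternant) formula for Schur polynomials at geometric arguments, in direct analogy with the Hanlon--Stanley--Stembridge argument behind Theorem \ref{th:Hanlon} but with $q$-Pochhammer symbols playing the role of Gamma functions. Writing
\begin{equation*}
\mathbb{E}[s_\lambda(X_1,\dots,X_N)] = \frac{1}{Z_N}\int s_\lambda(\mathbf{x})\,\Delta_N(\mathbf{x})^2 \prod_{j=1}^N \mathrm{d}\gamma^\alpha(x_j)
\end{equation*}
and applying the bialternant identity $s_\lambda(\mathbf{x})\,\Delta_N(\mathbf{x})=\det(x_i^{\lambda_j+N-j})$ together with $\Delta_N(\mathbf{x})=\det(x_i^{j-1})$, Andreief's identity collapses the $N$-fold integral into $(N!/Z_N)\det\bigl(\mu_{i-1+\lambda_j+N-j}\bigr)_{1\le i,j\le N}$, where $\mu_m := \int_0^\infty x^m\,\mathrm{d}\gamma^\alpha(x)$.

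The second step is to compute $\mu_m$ in closed form. Using the explicit atomic support from \eqref{eq:qLaguerre_meas}, each $\mu_m$ is a bilateral $q$-sum of the shape $\sum_{n\in\mathbb{Z}} q^{n(\alpha+m+1)}(-q;q)_n$, which is summable by Ramanujan's ${}_1\psi_1$ formula. The outcome factors as a product of $m$-independent infinite $q$-Pochhammer constants times a purely exponential $m$-dependent piece whose exponent is quadratic in $m$; this quadratic part in turn splits into a row-only contribution (linear in $i-1$), a column-only contribution (involving a $(q^{\alpha+1};q)$ symbol in $\lambda_j+N-j$), and a residual bilinear kernel $q^{2(i-1)(\lambda_j+N-j)}$.

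The third step is identification. After pulling out the row and column prefactors, what remains is $\det\bigl(q^{2(i-1)(\lambda_j+N-j)}\bigr)$, which is precisely the Weyl bialternant expression for $s_\lambda$ at a geometric progression; dividing by $\det\bigl(q^{2(i-1)(N-j)}\bigr)$ yields $s_\lambda(1,q^2,\dots,q^{2N-2})$. The column prefactor involving $\alpha$-shifted $q$-Pochhammer symbols reorganises, via a second application of the Weyl formula, into the companion factor $s_\lambda(1,q^2,\dots,q^{2N+2\alpha-2})$. Normalising by $Z_N$, which is exactly the $\lambda=\emptyset$ instance of the same computation, cancels the infinite-product constants, and the denominator $s_\lambda(1,q^2,q^4,\dots)$ is produced by the principal specialisation formula $s_\lambda(1,q^2,q^4,\dots)=q^{2n(\lambda)}\prod_{s\in\lambda}(1-q^{2h(s)})^{-1}$. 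Consolidating the leftover powers of $q$ and factors of $(1-q^2)$ produces the overall prefactor $q^{k(2-2N-\alpha)}/(1-q^2)^k$ with $k=|\lambda|$.

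The main obstacle will be the prefactor bookkeeping: tracking exactly which powers of $q$ are extracted row-by-row and column-by-column, and verifying that the $\alpha$-shift contribution reorganises as $s_\lambda(1,q^2,\dots,q^{2N+2\alpha-2})$ rather than a merely twisted geometric progression. A cleaner but less self-contained alternative is to derive the identity from a $q$-analogue of Kadell's Selberg integral specialised at the Laguerre parameter $\gamma=1$, sidestepping the explicit computation of $\mu_m$ at the cost of invoking heavier $q$-integral machinery.
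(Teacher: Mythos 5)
First, a point of reference: the paper offers no proof of this lemma at all --- it is imported verbatim from Morozov--Popolitov--Shakirov \cite{MPSqt} as an external input, so there is no internal argument to compare yours against. Judged on its own terms, your strategy (Andreief reduction to a determinant of one-dimensional moments, bilateral $q$-summation for those moments, then identification with principal specialisations of Schur functions) is the natural $q$-deformation of the Hanlon--Stanley--Stembridge argument behind Theorem \ref{th:Hanlon}, and it can be pushed through. But the middle step contains a genuine gap as written.

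The gap is the claimed structure of $\mu_m$. Applying Ramanujan's ${}_1\psi_1$ (with vanishing denominator parameter) to \eqref{eq:qLaguerre_meas} gives, up to $m$-independent constants, $\mu_m \propto q^{-m(\alpha+1)-m(m+1)/2}\,(q^{\alpha+1};q)_m$: the infinite products $(-q^{\alpha+m+2};q)_\infty(-q^{-\alpha-m-1};q)_\infty$ do collapse to a quadratic exponential in $m$, but the factor $1/(q^{\alpha+m+1};q)_\infty$ contributes the $q$-Gamma part $(q^{\alpha+1};q)_m$, so the $m$-dependence is emphatically \emph{not} ``purely exponential with quadratic exponent''. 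Consequently, writing $r=i-1$ and $s=\lambda_j+N-j$, the entry $\mu_{r+s}$ does not split as (row factor)$\times$(column factor)$\times$(bilinear exponential): the Pochhammer factorises as $(q^{\alpha+1};q)_{r+s}=(q^{\alpha+1};q)_s\,(q^{\alpha+1+s};q)_r$, and the second piece is irreducibly mixed in $r$ and $s$. What saves the argument is not factorisation but row reduction: after extracting the genuine row and column prefactors, the $(i,j)$ entry equals $q^{-rs}(q^{\alpha+1+s};q)_r=\prod_{l=0}^{r-1}(q^{-s}-q^{\alpha+1+l})$, a \emph{monic polynomial of degree $i-1$} in $q^{-(\lambda_j+N-j)}$, so the determinant collapses to a single Vandermonde in the variables $q^{-(\lambda_j+N-j)}$. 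One Schur factor then comes from that Vandermonde via Weyl's bialternant; the other comes from combining the column prefactor $\prod_j(q^{\alpha+1};q)_{\lambda_j+N-j}$ with the \emph{same} Vandermonde through the principal specialisation formula of Lemma \ref{le:ps_Schur} with $N$ replaced by $N+\alpha$ --- there is no ``second application of the Weyl formula'' available, because only one determinant survives. Finally, you must reconcile a base discrepancy before starting: the measure $\gamma^\alpha$ is written in base $q$ while the lemma (and Theorem \ref{th:qLag} downstream) is stated in base $q^2$; as the objects stand, your computation will produce specialisations of the form $s_\lambda(1,q,\dots,q^{N-1})$ rather than the stated $s_\lambda(1,q^2,\dots,q^{2N-2})$, so either the measure must be re-expressed in base $q^2$ or every exponent tracked accordingly.
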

We also recall formulas for specialisations of the Schur function which may be found, for example, in \cite[Theorem 7.21.2,  Corollary 7.21.3]{Stanley}.
\begin{lem} \label{le:ps_Schur}
The principal specialisations of the Schur function $s_{\lambda}(1,q^2, q^4, \dots)$ and $s_{\lambda}(1,q^2, \dots, q^{2N-2})$ can be written as
\begin{align} \label{eq:psSchur_proof}
& s_{\lambda}(1,q^2, q^4, \dots) = \frac{ q^{2 b(\lambda)} }{ [H_{\lambda}]_{q^2} (1-q^2)^k} \\
& s_{\lambda}(1,q^2, \dots, q^{2N-2}) = q^{2b(\lambda) } 
\prod_{(i,j) \in \lambda} \frac{ (1-q^{2(N+j-i)})}{(1-q^{2h_{\lambda}(i,j)})}
\end{align}
where $b(\lambda) = \sum (i-1) \lambda_i$, and $h_{\lambda}(i,j)$ is the hook length of the box $(i,j)$ in the Young diagram of the partition $\lambda$.
\end{lem}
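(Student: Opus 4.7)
The plan is to derive both identities from the bialternant (Jacobi) formula for the Schur polynomial. Both are classical results recorded in Stanley's textbook \cite{Stanley} (Theorem 7.21.2 and Corollary 7.21.3), so the argument I describe is essentially a transcription with $q$ replaced by $q^2$; nevertheless, it is worth sketching for clarity.

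First I would prove the second (finite, bounded) specialisation. Start from
\begin{equation*}
s_\lambda(x_1,\ldots,x_N) = \frac{\det_{1 \leq i,j \leq N}(x_i^{\lambda_j + N - j})}{\det_{1 \leq i,j \leq N}(x_i^{N-j})}
\end{equation*}
and substitute $x_i = q^{2(i-1)}$. Both determinants become generalised Vandermonde determinants in the variables $q^{2(i-1)}$, so they evaluate as products $\prod_{i<j}(q^{2(\lambda_j+N-j)} - q^{2(\lambda_i+N-i)})$ and $\prod_{i<j}(q^{2(N-j)} - q^{2(N-i)})$ respectively. Factoring the smaller power of $q$ from each binomial, tracking the total $q$-power carefully, and using $\sum_{i}(i-1)\lambda_i = b(\lambda)$, the overall monomial prefactor collapses to $q^{2b(\lambda)}$.

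The remaining ratio is a product of factors $(1-q^{2m})$ over pairs $(i,j)$. The classical content-hook manipulation (see Stanley, loc.~cit., or a direct induction on $|\lambda|$) reorganises this ratio as a product over boxes of the Young diagram, producing $\prod_{(i,j) \in \lambda}(1-q^{2(N+j-i)})$ in the numerator and $\prod_{(i,j) \in \lambda}(1-q^{2 h_\lambda(i,j)})$ in the denominator. This is exactly the second identity. The main obstacle is this rewriting step: identifying how the pair-indexed product translates to a box-indexed product is the only nontrivial combinatorial content of the proof, and it is the part that requires bookkeeping rather than mere manipulation.

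For the stable specialisation, I would pass to the limit $N \to \infty$ in the formula just derived; this is legitimate since the series $s_\lambda(1,q^2,q^4,\ldots)$ converges absolutely for $|q| < 1$, and each numerator factor $(1-q^{2(N+j-i)})$ tends to $1$. This gives $s_\lambda(1,q^2,q^4,\ldots) = q^{2b(\lambda)} / \prod_{(i,j) \in \lambda}(1-q^{2 h_\lambda(i,j)})$. To match the stated form, I would use the definition of the $q$-integer to write $1 - q^{2h} = (1-q^2)[h]_{q^2}$, so the denominator becomes $(1-q^2)^{|\lambda|}[H_\lambda]_{q^2} = (1-q^2)^k [H_\lambda]_{q^2}$, yielding the first identity.
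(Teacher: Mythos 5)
The paper offers no proof of this lemma --- it is recalled verbatim (with $q \mapsto q^2$) from \cite[Theorem 7.21.2, Corollary 7.21.3]{Stanley} --- and your proposal correctly identifies that source and sketches the standard bialternant derivation, so in effect you take the same route as the paper (deferral to the classical literature) while supplying more detail than it does. The sketch itself is sound: the geometric substitution $x_i = q^{2(i-1)}$ turns both determinants into genuine Vandermonde determinants in the variables $q^{2(\lambda_j+N-j)}$ and $q^{2(N-j)}$, the monomial bookkeeping does produce the prefactor $q^{2b(\lambda)}$, the pair-to-box reorganisation you flag as the one nontrivial step is precisely the content of the cited theorem, and the $N\to\infty$ limit together with $1-q^{2h}=(1-q^2)[h]_{q^2}$ correctly yields the stable case with $k=|\lambda|$.
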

We can therefore calculate the moments of the $q$-Laguerre ensemble as follows.
\begin{thm} \label{th:qLag}
The $k^{th}$ moment of the $q$-Laguerre normalised one-point function is given by the basic hypergeometric series
\begin{equation} \label{eq:qLag_mom}
M_{\alpha}(k,N) = 
\frac{ q^{k(2-2N-\alpha)} }{N  (1-q^2)^k }
\frac{ (q^{2N}; q^2)_k (q^{2N+2 \alpha}; q^2)_k}{(q^2; q^2)_k} {}_3 \phi_2 \left( \begin{matrix} 
q^{2-2k} , q^{2-2N}, q^{2-2N-2\alpha}  \\ q^{2-2N-2k}, q^{2-2N-2\alpha-2k} \end{matrix} ; q^2, \; q^{-2k} \right).
\end{equation}
\end{thm}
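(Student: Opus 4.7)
The plan is to mirror the classical derivation of Theorem \ref{th:HSS}, with the $q$-analogue Lemma \ref{le:qLag_schur} playing the role of Hanlon--Stanley--Stembridge. The orthogonal polynomial method gives $N M_\alpha(k,N) = \mathbb{E}_Q[p_k(X_1,\ldots,X_N)]$, so I would begin by expanding the power-sum symmetric polynomial in the Schur basis via the Murnaghan--Nakayama rule. A single cycle of length $k$ has nonzero character only on hook partitions, yielding
\begin{equation*}
p_k = \sum_{r=0}^{k-1} (-1)^r\, s_{(k-r, 1^r)}.
\end{equation*}
Applying Lemma \ref{le:qLag_schur} term-by-term then expresses $N M_\alpha(k,N)$ as a finite sum over $r$ of ratios of three Schur principal specialisations.

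Next I would evaluate those principal specialisations explicitly using Lemma \ref{le:ps_Schur}. For the hook $\lambda = (k-r, 1^r)$ the input data is elementary: $b(\lambda) = r(r+1)/2$; the contents $j - i$ form the set $\{-r, -r+1, \ldots, k-r-1\}$, splitting naturally into a negative range of length $r$ and a nonnegative range of length $k-r$; and the hook lengths are $\{k\} \cup \{1, \ldots, k-r-1\} \cup \{1, \ldots, r\}$, giving hook-length product $(1-q^{2k})(q^2;q^2)_{k-r-1}(q^2;q^2)_r$. After the factor $q^{2b(\lambda)}$ in the numerator cancels one of the two in the denominator's inverse, the Schur ratio reduces to a clean expression in $q$-Pochhammers. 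Multiplying by the prefactor $q^{k(2-2N-\alpha)}/(1-q^2)^k$ from Lemma \ref{le:qLag_schur} and summing over $r$ leaves the single-sum identity
\begin{equation*}
N M_\alpha(k,N) = \frac{q^{k(2-2N-\alpha)}}{(1-q^2)^k (1-q^{2k})} \sum_{r=0}^{k-1} (-1)^r q^{r(r+1)}\, \frac{(q^{2N};q^2)_{k-r}(q^{2N-2r};q^2)_r (q^{2N+2\alpha};q^2)_{k-r}(q^{2N+2\alpha-2r};q^2)_r}{(q^2;q^2)_{k-r-1}(q^2;q^2)_r}.
\end{equation*}

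The remaining and principal step is to recognise this sum as the stated ${}_3\phi_2$. The $r = 0$ term automatically supplies the overall prefactor $\frac{(q^{2N};q^2)_k (q^{2N+2\alpha};q^2)_k}{(q^2;q^2)_k}$, so I would divide through by it and verify that the residual $r$-dependent summand coincides with the ${}_3\phi_2$ summand. The two tools are the splitting identity $(a;q^2)_k = (a;q^2)_{k-r}(aq^{2(k-r)};q^2)_r$ (used to absorb the $(q^{2N};q^2)_{k-r}$ and $(q^{2N+2\alpha};q^2)_{k-r}$ factors into the prefactor, producing shifted factors $(q^{2N+2k-2r};q^2)_r$ and $(q^{2N+2\alpha+2k-2r};q^2)_r$), and the reflection identity $(q^c;q^2)_r = (-1)^r q^{cr+r(r-1)}(q^{2-2r-c};q^2)_r$ (used to rewrite each of the resulting five $r$-dependent shifted factors as one of the standard parameters $q^{2-2k}, q^{2-2N}, q^{2-2N-2\alpha}, q^{2-2N-2k}, q^{2-2N-2\alpha-2k}$). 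The main obstacle is the exponent arithmetic: three numerator and two denominator reflections produce a net sign $(-1)^r$, which cancels the $(-1)^r$ coming from Murnaghan--Nakayama, and a net $q$-power $q^{-r(r+1)-2kr}$, which combines with the $q^{r(r+1)}$ in the summand to give precisely the $q^{-2kr}$ argument of the target ${}_3\phi_2$. Both verifications reduce to routine bookkeeping, but they are the only non-mechanical part of the proof.
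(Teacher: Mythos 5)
Your proposal is correct and follows essentially the same route as the paper's proof: Murnaghan--Nakayama expansion of $p_k$ over hook partitions, the Morozov--Popolitov--Shakirov Schur average, explicit principal specialisations for hooks, and $q$-Pochhammer reflections to reach the stated ${}_3\phi_2$. The only difference is a cosmetic reorganisation of the Pochhammer bookkeeping (you split $(q^{2N-2r};q^2)_k$ before reflecting, whereas the paper rewrites it in one step), and your sign and exponent tallies check out.
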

\begin{proof}
We first recall the expansion of power sums in terms of Schur polynomials:
\begin{equation}
p_{\rho} = \sum_{\lambda} \chi_{\rho}^{\lambda} s_{\lambda}.
\end{equation}
Using the Murnaghan-Nakayama rule \cite[7.17]{Stanley}, it is clear that the only non-zero characters for a partition $(k)$, $\chi_{(k)}^{\lambda}$, come from the partitions with no 2x2 boxes, i.e.\ hook-shaped partitions $\lambda=(k-l, 1^l)$, which are given by $\chi_{(k)}^{(k-l, 1^l)} = (-1)^l$. Hence
\begin{equation}
p_{(k)} = \sum_{\lambda} \chi_{(k)}^{\lambda} s_{\lambda} = \sum_{l=0}^{k-1} (-1)^l s_{(k-l, 1^l)}.
\end{equation}
Therefore by linearity of expectation we have
\begin{equation}
M_{\alpha}(k,N) = 
\frac{1}{N} \mathbb{E}_Q \left[ p_{(k)}(X_1, \dots, X_N) \right] = 
\frac{1}{N} \sum_{l=0}^{k-1} (-1)^l \mathbb{E}_Q[s_{(k-l, 1^l)} (X_1, \dots, X_N)],
\end{equation}
and thus by Lemma \ref{le:qLag_schur}
\begin{equation} \label{eq:qLag_proof}
M_{\alpha}(k,N) = \sum_{l=0}^{k-1} (-1)^l 
\frac{ q^{k(2-2N-\alpha)}}{ N (1-q^2)^k}
\frac{ s_{(k-l, 1^l)}(1,q^2, \dots, q^{2N-2}) s_{(k-l, 1^l)}(1,q^2, \dots, q^{2N+2 \alpha -2})}
{ s_{(k-l, 1^l)}(1,q^2, q^4, \dots)}.
\end{equation}
Noting that the $q$-analogue of the product of the hook lengths can be written as
\begin{equation}
[H_{\lambda}]_{q^2} = \prod_{(i,j) \in \lambda} \frac{ 1-q^{2h_{\lambda}(i,j)}}{1-q^2},
\end{equation} 
using Lemma \ref{le:ps_Schur}, we can give the principal specialisations of the Schur functions for a hook-shaped partition $\lambda = (k-l,1^l)$ explicitly in terms of $q$-Pochhammer symbols.  In this case,
\begin{equation}
b((k-l, 1^l)) = \sum_{i=1}^{l+1} (i-1) \lambda_i = \sum_{i=2}^{l+1} (i-1) = 
\frac{l(l+1)}{2},
\end{equation}
and so
\begin{align}
s_{(k-l, 1^l)}(1,q^2, \dots, q^{2N-2}) 
&= q^{l(l+1)} \frac{ \prod_{(i,j) \in \lambda} (1 - q^{2(N+j-i)})}{[H_{\lambda}]_{q^2} (1-q^2)^k}  \\
&= q^{l(l+1)} \frac{ (q^{2N-2l} ; q^2)_k }{ (1-q^{2k}) (q^2; q^2)_l (q^2; q^2)_{k-l-1}}.
\end{align}
Therefore substituting this into equation \eqref{eq:qLag_proof} (and noting the cancellation of the terms in \eqref{eq:psSchur_proof}), we have the moment formula
\begin{align} \label{eq:qLag_sum}
M_{\alpha}(k,N)
&= \sum_{l=0}^{k-1} (-1)^l q^{k(2-2N-\alpha) + l(l+1)} 
\frac{ (q^{2N-2l}; q^2)_k (q^{2N+2\alpha-2l} ; q^2)_k} {N (1-q^2)^k (1-q^{2k}) (q^2; q^2)_l (q^2; q^2)_{k-l-1}} 
\nonumber \\
&= \frac{ q^{k(2-2N-\alpha)} }{ N (1-q^2)^k (1-q^{2k})}
\sum_{l=0}^{k-1} (-1)^l q^{l(l+1)} \frac{ (q^{2N-2l}; q^2)_k (q^{2N+2\alpha-2l} ; q^2)_k}{(q^2; q^2)_l (q^2; q^2)_{k-l-1}} 
\end{align}
and these $q$-Pochhammer symbols can be transformed into the form necessary for a basic hypergeometric series as follows: firstly, by \eqref{eq:q_rearr} we have
\begin{equation}
(q^2; q^2)_{k-l-1} = (-1)^l q^{-2kl} q^{ l(l+1)}  \frac{ (q^2; q^2)_{k-1}} {(q^{2-2k}; q^2)_l}.
\end{equation}
Furthermore, we can rewrite 
\begin{align}
(q^{2N-2l}; q^2)_k &=
(1-q^{2N-2l}) (1-q^{2N-2l+2}) \dots (1-q^{2N-2l+2k-2}) 
\nonumber \\
&= (1-q^{2N})(1-q^{2N+2}) \dots (1-q^{2N+2k-2})
\frac{ (1-q^{2N-2}) \dots (1-q^{2N-2l})} {(1-q^{2N+2k-2}) \dots (1-q^{2N+2k-2l})}
\nonumber \\
&= (q^{2N}; q^2)_k
\frac{ (q^{2N-2}; q^{-2})_l}{ (q^{2N+2k-2} ; q^{-2})_l}
\nonumber \\
&= (q^{2N}; q^2)_k
\frac{ (-1)^l q^{2Nl - l(l+1)} (q^{2-2N}; q^2)_l} { (-1)^l q^{2Nl + 2kl - l(l+1)} (q^{2-2N-2k}; q^2)_l} 
\nonumber \\
&= (q^{2N}; q^2)_k \frac{ (q^{2-2N}; q^2)_l}{q^{2kl} (q^{2-2N-2k}; q^2)_l}, 
\end{align}
and similarly
\begin{equation}
(q^{2N+2\alpha-2l}; q^2)_k = (q^{2N+2 \alpha}; q^2)_k \frac{ (q^{2-2N-2\alpha}; q^2)_l}{q^{2kl} (q^{2-2N- 2 \alpha -2k}; q^2)_l}.
\end{equation}
Hence putting these 3 terms together and multiplying by $(-1)^l q^{l(l+1)}$, we have that the summand in \eqref{eq:qLag_sum} is equal to
\begin{equation}
\frac{ (q^{2N}; q^2)_k (q^{2N+2\alpha}; q^2)_k}{(q^2; q^2)_{k-1} } 
\frac{ (q^{2-2k}; q^2)_l (q^{2-2N}; q^2)_l (q^{2-2N-2\alpha}; q^2)_l}{q^{2kl} (q^{2-2N-2k}; q^2)_l (q^{2-2N-2\alpha-2k}; q^2)_l}.
\end{equation}
Therefore the $k^{th}$ moment of the $q$-Laguerre normalised one-point function is
\begin{multline}
M_{\alpha}(k,N) = 
\frac{ q^{k(2-2N-\alpha)} }{ (1-q^2)^k (1-q^{2k})}
\frac{ (q^{2N}; q^2)_k (q^{2N+2\alpha}; q^2)_k}{(q^2; q^2)_{k-1} } 
\nonumber \\
\times \sum_{l=0}^{k-1} q^{-2kl} 
\frac{ (q^{2-2k}; q^2)_l (q^{2-2N}; q^2)_l (q^{2-2N-2\alpha}; q^2)_l}{(q^2; q^2)_l (q^{2-2N-2k}; q^2)_l (q^{2-2N-2\alpha-2k}; q^2)_l}
\end{multline}
which is the basic hypergeometric series required.
\end{proof}
We note that this moment formula \eqref{eq:qLag_mom} is a $q$-analogue of the classical LUE moment formula \eqref{eq:LUE_mom}, which can be recovered by taking $q \to 1$. As in the $q$-Hermite ensemble, we find that when we choose the number of particles randomly according to a negative binomial distribution, the moments of this randomised ensemble are in fact basic hypergeometric orthogonal polynomials. 
\begin{thm}
Let the $k^{th}$ moment of the $q$-Laguerre normalised one-point function, when $N-1$ is chosen randomly according to a Negative Binomial distribution with parameters $(2,z)$ for some $0 < z < 1$, be denoted
\begin{equation}
M_k(z; \alpha, q) := \sum_{N \geq 1} M_{\alpha}(k,N) N z^{N-1} (1-z)^2.
\end{equation}
Then this randomised moment is given by
\begin{equation} \label{eq:qLag_thm}
M_k(z; \alpha, q) =
 \frac{ [k+\alpha]_{q^2}! }{ [\alpha]_{q^2}! } (z; q^{-2})_k q^{-\alpha k}  (1-z)
{}_2 \phi_1 \left( \begin{matrix} q^{2k+2}, q^{2k+2+2 \alpha}  \\ q^{2 \alpha + 2} \end{matrix} ; q^2, \; zq^{-2k} \right).
\end{equation}
\end{thm}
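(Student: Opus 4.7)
The plan is to substitute the explicit moment formula for $M_{\alpha}(k,N)$ into the definition of $M_k(z;\alpha,q)$ and carry out the two summations in the right order. My starting point would be the intermediate form~\eqref{eq:qLag_sum} arising in the proof of Theorem~\ref{th:qLag}, which expresses $N\,M_{\alpha}(k,N)$ as a \emph{finite} sum over $l\in\{0,1,\ldots,k-1\}$; this is easier to handle than the ${}_3\phi_2$ form~\eqref{eq:qLag_mom}. Multiplying by $(1-z)^2 z^{N-1}$, summing over $N\geq 1$, and interchanging the two summations, the problem reduces to evaluating, for each fixed $l$, the series
\[
\sum_{N\geq 1} z^{N-1} q^{-2Nk}\, (q^{2N-2l};q^2)_k\, (q^{2N+2\alpha-2l};q^2)_k.
\]

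The key step is to exploit the vanishing of $(q^{2N-2l};q^2)_k$ for $1\leq N\leq l$, so that this series effectively begins at $N=l+1$. After the shift $N\mapsto N+l$ the two Pochhammer factors simplify to $(q^{2N+2};q^2)_k$ and $(q^{2N+2\alpha+2};q^2)_k$, both independent of $l$; writing them as $(q^2;q^2)_k (q^{2k+2};q^2)_M/(q^2;q^2)_M$ and $(q^{2\alpha+2};q^2)_k (q^{2\alpha+2k+2};q^2)_M/(q^{2\alpha+2};q^2)_M$ with $M = N-1$, the inner sum becomes $(zq^{-2k})^l \cdot q^{-2k}(q^2;q^2)_k (q^{2\alpha+2};q^2)_k$ times the basic hypergeometric series
\[
{}_2\phi_1\!\left(\begin{matrix} q^{2k+2},\; q^{2k+2+2\alpha}\\ q^{2\alpha+2}\end{matrix}; q^2,\; zq^{-2k}\right),
\]
which is precisely the ${}_2\phi_1$ appearing on the right-hand side of~\eqref{eq:qLag_thm} and may be pulled outside the $l$-sum as a common factor.

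What remains is the finite sum $\sum_{l=0}^{k-1}(-1)^l q^{l(l+1)}(zq^{-2k})^l\big/\bigl((q^2;q^2)_l (q^2;q^2)_{k-l-1}\bigr)$. Pulling out $1/(q^2;q^2)_{k-1}$ exposes the Gaussian binomial coefficient $\binom{k-1}{l}_{q^2}$, and using the rewriting $q^{l(l+1)}(zq^{-2k})^l = q^{l(l-1)}(zq^{-2(k-1)})^l = q^{2\binom{l}{2}}(zq^{-2(k-1)})^l$, the finite $q$-binomial theorem $\sum_{l=0}^n \binom{n}{l}_q (-w)^l q^{\binom{l}{2}} = (w;q)_n$ (applied with $q\mapsto q^2$, $n=k-1$, $w=zq^{-2(k-1)}$) collapses the sum to $(zq^{-2(k-1)};q^2)_{k-1}/(q^2;q^2)_{k-1}$. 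A short direct computation identifies $(zq^{-2(k-1)};q^2)_{k-1} = (z;q^{-2})_k/(1-z)$, producing the $(z;q^{-2})_k$ factor of~\eqref{eq:qLag_thm}.

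All that remains is to collect prefactors: $(q^2;q^2)_k$ combined with $1/(1-q^{2k})$ from the prefactor cancels against $(q^2;q^2)_{k-1}$ from the $l$-sum, the identity $(q^{2\alpha+2};q^2)_k/(1-q^2)^k = [k+\alpha]_{q^2}!/[\alpha]_{q^2}!$ rewrites the remaining constants in the target form, and one of the two factors of $(1-z)$ cancels against $1/(1-z)$, yielding~\eqref{eq:qLag_thm} exactly. The main technical subtlety to watch for is the index shift in $N$: a direct attempt to transform $(q^{2N-2l};q^2)_k$ via ratios of $q$-Pochhammer symbols in $N$ alone produces formally indeterminate $0/0$ expressions, whereas recognising the vanishing of the initial $l$ terms and re-indexing from $N=l+1$ both resolves the indeterminacy and cleanly decouples the $l$-dependence from the $N$-sum, unveiling the $l$-independent ${}_2\phi_1$ that otherwise would be obscured.
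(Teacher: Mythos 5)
Your proposal is correct and follows essentially the same route as the paper: substitute the explicit finite hook-sum form of $N\,M_{\alpha}(k,N)$, interchange the two summations, decouple the hook index from $N$ by rearranging $q$-Pochhammer symbols so that the $N$-sum becomes the ${}_2\phi_1$ with argument $zq^{-2k}$, and evaluate the leftover finite sum by the $q$-binomial theorem to produce the factor $(z;q^{-2})_k/(1-z)$. The only differences are presentational --- you start from the raw sum \eqref{eq:qLag_sum} rather than the repackaged ${}_3\phi_2$ of \eqref{eq:qLag_mom}, and you invoke the terminating (Rothe) form of the $q$-binomial theorem where the paper evaluates the equivalent terminating series ${}_1\phi_0\left(\begin{smallmatrix} q^{2-2k} \\ - \end{smallmatrix};q^2,\,z\right)$ --- so the two arguments are the same computation in different clothing.
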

\begin{proof}
We recall by Theorem \ref{th:qLag} that
\begin{equation}
M_{\alpha}(k,N) = 
\frac{ q^{k(2-2N-\alpha)} }{ N(1-q^2)^k }
\frac{ (q^{2N}; q^2)_k (q^{2N+2 \alpha}; q^2)_k}{(q^2; q^2)_k} {}_3 \phi_2 \left( \begin{matrix} 
q^{2-2k} , q^{2-2N}, q^{2-2N-2 \alpha}  \\ q^{2-2N-2k}, q^{2-2N-2k-2 \alpha} \end{matrix} ; q^2, \; q^{-2k} \right).
\end{equation}
Randomising the number of particles, we therefore have
\begin{align} \label{eq:qLag_rand}
& M_k(z;\alpha,q) = \sum_{N \geq 1} (1-z)^2 z^{N-1} \frac{ q^{k(2-2N-\alpha)} }{(1-q^2)^k }
\frac{ (q^{2N}; q^2)_k (q^{2N+2 \alpha}; q^2)_k}{(q^2; q^2)_k}
\nonumber \\
& \times {}_3 \phi_2 \left( \begin{matrix} 
q^{2-2k} , q^{2-2N}, q^{2-2N-2 \alpha}  \\ q^{2-2N-2k}, q^{2-2N-2k-2 \alpha} \end{matrix} ; q^2, \; q^{-2k} \right)
\nonumber \\
&= \sum_{N \geq 1} (1-z)^2 q^{- \alpha k} z^{N-1} 
\frac{ q^{2k(1-N)}}{ (1-q^2)^k} \frac{ (q^{2N}; q^2)_k (q^{2N+ 2 \alpha}; q^2)_k}{(q^2; q^2)_k}
\nonumber \\
& \times
\sum_{n=0}^{(k-1) \wedge (N-1)} \frac{ (q^{2-2k}; q^2)_n (q^{2-2N}; q^2)_n (q^{2-2N-2 \alpha}; q^2)_n} {(q^{2-2N-2k}; q^2)_n (q^{2-2N-2k-2 \alpha}; q^2)_n (q^2; q^2)_n} q^{-2kn}
\nonumber \\
&= \frac{ (1-z)^2}{(1-q^2)^k (q^2; q^2)_k} \sum_{n=0}^{k-1} q^{-2kn} 
\frac{ (q^{2-2k}; q^2)_n}{(q^2; q^2)_n} 
\nonumber \\
& \times
\sum_{N \geq n+1} 
\left( \frac{ (q^{2-2N}; q^2)_n}{(q^{2-2N-2k}; q^2)_n} (q^{2N}; q^2)_k \right)
\left( \frac{ (q^{2-2N-2 \alpha}; q^2)_n}{(q^{2-2N-2k-2 \alpha}; q^2)_n} (q^{2N+2 \alpha}; q^2)_k \right) q^{2k(1-N)} z^{N-1}.
\end{align}
Rearranging the $q$-Pochhammer symbols, we find that
\begin{equation}
\left( \frac{ (q^{2-2N}; q^2)_n}{(q^{2-2N-2k}; q^2)_n} (q^{2N}; q^2)_k \right) =
 q^{2kn} \frac{ (q^2; q^2)_k (q^{2k+2}; q^2)_{N-n-1} }{ (q^2; q^2)_{N-n-1}}
\end{equation}
and by the same logic
\begin{equation}
\left( \frac{ (q^{2-2N-2 \alpha}; q^2)_n}{(q^{2-2N-2k-2 \alpha}; q^2)_n} (q^{2N+2 \alpha}; q^2)_k \right) =
 q^{2kn} \frac{ (q^2; q^2)_{k+\alpha} (q^{2k+2+2 \alpha}; q^2)_{N-n-1} }{ (q^2; q^2)_{\alpha} (q^{2 \alpha + 2}; q^2)_{N-n-1}}.
\end{equation}
Therefore the $N$ sum in \eqref{eq:qLag_rand} becomes
\begin{align}
& \sum_{N \geq n+1} q^{4kn} 
\left( \frac{ (q^2; q^2)_k (q^{2k+2}; q^2)_{N-n-1}}{(q^2; q^2)_{N-n-1}} \right)
\left( \frac{ (q^2; q^2)_{k+\alpha} (q^{2k+2+2 \alpha}; q^2)_{N-n-1} }{ (q^2; q^2)_{\alpha} (q^{2 \alpha+ 2}; q^2)_{N-n-1}} \right)  (zq^{-2k})^{N-1}
\nonumber \\
&=  q^{4kn} (q^2; q^2)_k (q^2; q^2)_{k + \alpha} \sum_{j \geq 0} \frac{(q^{2k+2}; q^2)_j (q^{2k+2+2 \alpha}; q^2)_j }{(q^{2 \alpha+2}; q^2)_j (q^2; q^2)_j} (zq^{-2k})^{j+n} 
\nonumber \\
&=  z^n q^{2kn} (q^2; q^2)_k (q^2; q^2)_{k + \alpha} \sum_{j \geq 0} \frac{(q^{2k+2}; q^2)_j (q^{2k+2+2 \alpha}; q^2)_j }{(q^{2 \alpha+2}; q^2)_j (q^2; q^2)_j} (zq^{-2k})^j,
\end{align}
thus giving the randomised moment
\begin{equation}
M_k(z;\alpha,q) = (1-z)^2 \frac{ (q^2; q^2)_{k+\alpha} q^{-\alpha k}}{(1-q^2)^k (q^2; q^2)_{\alpha}}  
{}_1 \phi_0 \left( \begin{matrix} q^{2-2k}  \\ - \end{matrix} ; q^2, \; z \right) 
{}_2 \phi_1 \left( \begin{matrix} q^{2k+2}, q^{2k+2+2 \alpha}  \\ q^{2 \alpha + 2} \end{matrix} ; q^2, \; zq^{-2k} \right).
\end{equation}
Hence by the $q$-binomial theorem, which can be stated as
\begin{equation}
{}_1 \phi_0 \left( \begin{matrix} a  \\ - \end{matrix} ; q \; z \right) = \frac{ (az;q)_{\infty}} {(z;q)_{\infty}},
\end{equation}
and the definition of the $q$-integers, we have the required formula \eqref{eq:qLag_thm}.
\end{proof}
We again note that this randomised moment formula \eqref{eq:qLag_thm} is a $q$-analogue of the LUE generating function \eqref{eq:LUE_gf}, which can be recovered by taking $q \to 1$. This basic hypergeometric series can be rewritten using some transformations  due to Heine (which can be found in \cite[1.13.3]{Koekoek10} for example) and Jackson \cite[1.13.17]{Koekoek10}:
\begin{align}
& \label{eq:tr_Heine} {}_2 \phi_1 \left( \begin{matrix} a,b  \\ c \end{matrix} ; q, \; z \right) = 
\frac{ (abc^{-1}z; q)_{\infty}}{ z; q)_{\infty}}
{}_2 \phi_1 \left( \begin{matrix} a^{-1} c,b^{-1} c  \\ c \end{matrix} ; q, \; abzc^{-1} \right),  \\
& \label{eq:tr_Jackson} {}_2 \phi_1 \left( \begin{matrix} q^{-n} ,  b  \\ c \end{matrix} ; q, \; z \right) =
\frac{ (q^{-n} bc^{-1}z; q)_{\infty}}{ bc^{-1} z ; q)_{\infty}}
{}_3 \phi_2 \left( \begin{matrix} q^{-n} ,  b^{-1}c  \\ 0 \end{matrix} ; q, \; q \right).
\end{align}
We finally note the definition of the unnormalised $n^{th}$ Big $q$-Jacobi polynomial \cite[14.5]{Koekoek10}:
\begin{equation} \label{eq:qJac}
B_n(x; a,b,c; q) = {}_3 \phi_2 \left( \begin{matrix} q^{-n}, abq^{n+1}, x \\ aq, cq \end{matrix} ; q, q \right),
\end{equation}
so choosing the parameters appropriately, we have that
\begin{equation}
B_k(0; q^{2 \alpha}, q^{-2 \alpha}, z^{-1}; q^2) = 
{}_3 \phi_2 \left( \begin{matrix} q^{-2k}, q^{2k+2}, 0  \\ q^{2 \alpha+2},  z^{-1} q^2 \end{matrix} ; q^2, \; q^2 \right).
\end{equation}
\begin{cor}
The randomised $q$-Laguerre moment $M_k(z;\alpha,q)$ can be written in terms of a Big $q$-Jacobi polynomial:
\begin{equation}
M_k(z;\alpha,q) = \frac{ [k+\alpha]_{q^2}! }{ [\alpha]_{q^2}! }
\frac{ (z; q^{-2})_k }{ (zq^2 ; q^2)_k }q^{-\alpha k} 
B_k(0; q^{2 \alpha}, q^{-2 \alpha}, z^{-1}; q^2).
\end{equation}
\end{cor}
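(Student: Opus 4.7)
The plan is to derive this corollary directly from the preceding theorem by applying the Heine and Jackson transformations quoted in \eqref{eq:tr_Heine} and \eqref{eq:tr_Jackson} to rewrite the ${}_2\phi_1$ appearing in the theorem's expression for $M_k(z;\alpha,q)$ as the ${}_3\phi_2$ of Big $q$-Jacobi form from \eqref{eq:qJac}, specialised at $x=0$, $a=q^{2\alpha}$, $b=q^{-2\alpha}$, $c=z^{-1}$, at base $q^2$. First I would apply Heine's transformation with $a = q^{2k+2}$, $b = q^{2k+2+2\alpha}$, $c = q^{2\alpha+2}$ at base $q^2$ and argument $zq^{-2k}$.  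A quick calculation gives $abz/c = zq^{2k+2}$, $c/a = q^{2\alpha-2k}$ and $c/b = q^{-2k}$, so Heine yields
\[
{}_2\phi_1\left(\begin{matrix} q^{2k+2}, q^{2k+2+2\alpha} \\ q^{2\alpha+2}\end{matrix}; q^2, zq^{-2k}\right)
= \frac{(zq^{2k+2};q^2)_\infty}{(zq^{-2k};q^2)_\infty}\,{}_2\phi_1\left(\begin{matrix} q^{2\alpha-2k}, q^{-2k} \\ q^{2\alpha+2}\end{matrix}; q^2, zq^{2k+2}\right),
\]
where the right-hand ${}_2\phi_1$ now terminates at order $k$ thanks to the $q^{-2k}$ numerator parameter.

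Next I would apply Jackson's transformation \eqref{eq:tr_Jackson} to this terminating ${}_2\phi_1$, taking $n = 2k$ and tuning the remaining parameters so that the resulting ${}_3\phi_2$ carries upper parameters $(q^{-2k}, q^{2k+2}, 0)$ and lower parameters $(q^{2\alpha+2}, z^{-1}q^2)$ at argument $q^2$, which is precisely $B_k(0;q^{2\alpha},q^{-2\alpha},z^{-1};q^2)$ by \eqref{eq:qJac}.  To assemble the final formula one telescopes the infinite Heine ratio using $(zq^{-2k};q^2)_\infty = (zq^{-2k};q^2)_{2k+1}(zq^{2k+2};q^2)_\infty$ into the reciprocal $1/(zq^{-2k};q^2)_{2k+1}$, and applies the elementary rearrangements
\[
(zq^{-2k};q^2)_{2k+1} = (zq^{-2k};q^2)_k\,(1-z)\,(zq^2;q^2)_k, \qquad (zq^{-2k};q^2)_k = (1-zq^{-2k})\frac{(z;q^{-2})_k}{1-z},
\]
so that, combined with the Jackson prefactors and the original prefactor $([k+\alpha]_{q^2}!/[\alpha]_{q^2}!)\,(z;q^{-2})_k\,q^{-\alpha k}(1-z)$ from the theorem, the overall scalar collapses to $([k+\alpha]_{q^2}!/[\alpha]_{q^2}!)\,(z;q^{-2})_k\,q^{-\alpha k}/(zq^2;q^2)_k$, as required.

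The main technical obstacle is the bookkeeping of these prefactors.  One must track the interplay between the infinite $q$-Pochhammer ratio produced by Heine, the finite $q$-Pochhammer factors produced by Jackson, and the $q^{-2}$-base symbol $(z;q^{-2})_k$ from the theorem's prefactor, and verify that after converting everything to $q^2$-base symbols and cancelling, what remains is exactly the factor $(z;q^{-2})_k/(zq^2;q^2)_k$ stated in the corollary.  Once this cancellation is verified, the identification of the terminal ${}_3\phi_2$ with the Big $q$-Jacobi polynomial is immediate from \eqref{eq:qJac}.
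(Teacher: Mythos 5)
Your proposal is correct and follows essentially the same route as the paper: apply Heine's transformation to reverse the parameters of the ${}_2\phi_1$, then Jackson's transformation to convert the resulting terminating series into the ${}_3\phi_2$ defining $B_k(0;q^{2\alpha},q^{-2\alpha},z^{-1};q^2)$, and finally cancel the infinite $q$-Pochhammer prefactors against $(1-z)$ to obtain $1/(zq^2;q^2)_k$. (The only quibble is notational: at base $q^2$ the terminating parameter $q^{-2k}=(q^2)^{-k}$ corresponds to degree $k$, not $2k$, and the prefactor bookkeeping is simpler than you suggest since the two infinite ratios combine directly into $1/(z;q^2)_{k+1}$.)
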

\begin{proof}
Using the transformations of Heine and Jackson \eqref{eq:tr_Heine},\eqref{eq:tr_Jackson},  we can rewrite $M_k(z;\alpha,q)$ as a ${}_3 \phi_2$ basic hypergeometric series:
\begin{align}
& M_k(z;\alpha,q) =
\frac{ [k+\alpha]_{q^2}! }{ [\alpha]_{q^2}! } (z; q^{-2})_k q^{-\alpha k}  (1-z)
{}_2 \phi_1 \left( \begin{matrix} q^{2k+2}, q^{2k+2+2 \alpha}  \\ q^{2 \alpha + 2} \end{matrix} ; q^2, \; zq^{-2k} \right)
\nonumber \\
&= \frac{ [k+\alpha]_{q^2}! }{ [\alpha]_{q^2}! } (z; q^{-2})_k q^{-\alpha k}  (1-z)
\frac{ (zq^{2k+2}; q^2)_{\infty}} {(zq^{-2k}; q^2)_{\infty} }
{}_2 \phi_1 \left( \begin{matrix} q^{-2k+2 \alpha}, q^{-2k}  \\ q^{2 \alpha+2} \end{matrix} ; q^2, \; zq^{2k+2} \right)
\nonumber \\
&= \frac{ [k+\alpha]_{q^2}! }{ [\alpha]_{q^2}! } (z; q^{-2})_k q^{-\alpha k}  (1-z)
\frac{ (zq^{2k+2}; q^2)_{\infty}} {(zq^{-2k}; q^2)_{\infty} }
\frac{ (zq^{-2k}; q^2)_{\infty}} {(z; q^2)_{\infty} }
{}_3 \phi_2 \left( \begin{matrix} q^{-2k}, q^{2k+2}, 0  \\ q^{2 \alpha+2},  z^{-1} q^2 \end{matrix} ; q^2, \; q^2 \right),
\end{align}
which by the definition \eqref{eq:qJac} is exactly the form of the Big $q$-Jacobi polynomial required.
\end{proof}
By the general properties of the orthogonal polynomials, the moments of this randomised ensemble therefore immediately satisfy a three-term recurrence.
\begin{cor}
Denoting the $k^{th}$ randomised moment by $M_k(z;\alpha,q)$, the following three-term recurrence is satisfied.
\begin{equation}
A_k M_{k+1}(z; \alpha,q) + D_k M_k(z;\alpha,q) - C_k M_{k+1}(z;\alpha,q) = 0
\end{equation}
where
\begin{align}
& A_k = \frac{(1-q^{2k+2} z^{-1}) }
{ (1-q^{4k+2}) (1+q^{2k+2} )},
\nonumber \\
& D_k = \frac{ (1-zq^{-2k} )q^{- \alpha}} { (1-q^2) (1-zq^{2k+2})}
 \left(1 - (1-q^{2k+2 \alpha + 2}) A_k
- \frac{ q^{2k+2 \alpha + 2} (1-zq^{2k}) (1-q^{2k+2 \alpha}) }
{ z (1-q^{4k+2}) (1+q^{2k} )}  \right) ,
\nonumber \\
& C_k = \frac{ (1-q^{2k+2 \alpha}) (1-zq^{-2k}) (1-zq^{2-2k}) } 
{ (1-q^2)^2 (1-zq^{2k+2}) (1-zq^{2k}) }
\left( \frac{ q^{2k+2} (1-zq^{2k}) (1-q^{2k+2 \alpha}) }
{ z(1-q^{4k+2}) (1+q^{2k} )}  \right)
\end{align}
\end{cor}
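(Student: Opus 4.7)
The plan is to piece together the three-term recurrence for $M_k(z;\alpha,q)$ from the identification in the preceding corollary, namely
\begin{equation*}
M_k(z;\alpha,q) \;=\; F_k \, B_k\bigl(0;\, q^{2\alpha},\, q^{-2\alpha},\, z^{-1};\, q^2\bigr),
\qquad
F_k \;:=\; \frac{[k+\alpha]_{q^2}!}{[\alpha]_{q^2}!}\,\frac{(z;q^{-2})_k}{(zq^2;q^2)_k}\,q^{-\alpha k},
\end{equation*}
combined with the standard three-term recurrence satisfied by the Big $q$-Jacobi polynomials (see Koekoek--Lesky--Swarttouw \cite[14.5]{Koekoek10}). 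Once those two ingredients are in place, the desired identity follows by rescaling.

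First, I would record the Big $q$-Jacobi recurrence in the form
\begin{equation*}
-(1-x)\,B_n(x;a,b,c;q) \;=\; \widetilde A_n\,B_{n+1}(x;a,b,c;q)\;-\;(\widetilde A_n+\widetilde C_n)\,B_n(x;a,b,c;q)\;+\;\widetilde C_n\,B_{n-1}(x;a,b,c;q),
\end{equation*}
with the explicit coefficients $\widetilde A_n,\widetilde C_n$ from the Askey scheme, and then specialise at $x=0$, base $q\mapsto q^2$, and parameters $a=q^{2\alpha}$, $b=q^{-2\alpha}$, $c=z^{-1}$. This reduces the coefficients to rational functions of $q^{2k}$, $q^{2\alpha}$ and $z$ alone and produces a clean three-term relation for $B_k(0;q^{2\alpha},q^{-2\alpha},z^{-1};q^2)$.

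Second, I would substitute $B_k = M_k / F_k$ and multiply through so as to clear the $F$-denominators, which amounts to computing the ratios
\begin{equation*}
\frac{F_{k+1}}{F_k} \;=\; \frac{[k+\alpha+1]_{q^2}}{1}\cdot\frac{(1-zq^{-2k})}{(1-zq^{2k+2})}\cdot q^{-\alpha},
\qquad
\frac{F_{k-1}}{F_k} \;=\; \frac{1}{[k+\alpha]_{q^2}}\cdot\frac{(1-zq^{2k})}{(1-zq^{2-2k})}\cdot q^{\alpha},
\end{equation*}
(up to powers of $(1-q^2)$ absorbed into the $q$-factorials). Multiplying the three terms of the $B$-recurrence by $F_{k+1}/F_k$, $1$ and $F_{k-1}/F_k$ respectively, one extracts the coefficients $A_k$, $D_k$ and $C_k$ of $M_{k+1}$, $M_k$ and $M_{k-1}$ as stated; the middle coefficient $D_k$ picks up the $-(1-x)|_{x=0}=-1$ contribution from the left-hand side together with $-(\widetilde A_k+\widetilde C_k)$, which after rearrangement matches the bracketed form displayed in the statement.

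The substantive part of the work is purely algebraic bookkeeping: verifying that the products $\widetilde A_k\cdot F_{k+1}/F_k$ and $\widetilde C_k\cdot F_{k-1}/F_k$ collapse to the closed forms $A_k$ and $C_k$ written in the corollary, and that $-1-\widetilde A_k-\widetilde C_k$, after scaling, reproduces the three-summand expression for $D_k$. I expect the main obstacle to be exactly this simplification step, since the coefficients mix four different $q$-shifts of the parameters ($q^{2k}$, $q^{2k+2\alpha}$, $zq^{\pm 2k}$) and several Pochhammer-type factors at shifted arguments must telescope; a careful use of the elementary identity $(aq^n;q)_1=(1-aq^n)$ and the factorisations of $(1-q^{4k+2})$ as $(1-q^{2k+1})(1+q^{2k+1})$ (applied base-$q^2$, so $(1-q^{4k+2})=(1-q^{2k+1})(1+q^{2k+1})$ becomes $(1-q^{2k+1})(1+q^{2k+1})$ in the appropriate variable) will be needed to reveal the symmetric denominators $(1-q^{4k+2})(1\pm q^{2k+2})$ that appear in $A_k$, $D_k$, $C_k$. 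No new ideas beyond this mechanical computation are required.
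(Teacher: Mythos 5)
Your proposal is correct and follows essentially the same route as the paper: the paper's proof consists precisely of invoking the Big $q$-Jacobi three-term recurrence $a_nB_{n+1}(x)-(a_n+c_n+(x-1))B_n(x)+c_nB_{n-1}(x)=0$ with the Askey-scheme coefficients, specialised at $x=0$, base $q^2$, $a=q^{2\alpha}$, $b=q^{-2\alpha}$, $c=z^{-1}$. Your additional bookkeeping of the prefactor ratios $F_{k\pm1}/F_k$ (which correctly absorb the $[k+\alpha+1]_{q^2}$ and Pochhammer factors into the stated $A_k$, $D_k$, $C_k$) is exactly the step the paper leaves implicit in calling the result ``immediate.''
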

\begin{proof}
This follows as an immediate consequence of the three-term recurrence satisfied by the Big $q$-Jacobi polynomials:
\begin{equation} \label{eq:qJac_rec}
a_n B_{n+1}(x) - \left( a_n + c_n + (x-1) \right) B_n(x) + c_n B_{n-1}(x)  = 0
\end{equation}
where we denote $B_n(x) = B_n(x; a,b,c; q)$, and
\begin{align}
& a_n = \frac{ (1-aq^{n+1})(1- abq^{n+1})(1-c q^{n+1})}
{ (1-ab q^{2n+1})(1- ab q^{2n+2}) },
\nonumber \\
& c_n = -\frac{ ac q^{n+1} (1-q^n)(1- abc^{-1} q^n)(1- b q^n)}
{ (1- ab q^{2n})(1 - ab q^{2n+1})}.
\end{align}
\end{proof}

\end{document}